\numberwithin{equation}{section}
\newtheorem{lem}{Lemma}[section]
\newtheorem{theo}{Theorem}[section]
\newtheorem{rmk}{Remark}[section]
\newtheorem{dfn}{Definition}[section]
\newtheorem{asmp}{Assumption}[section]
\newtheorem{ex}{Example}[section]
\newtheorem*{theo*}{Theorem}
\newcommand{\bbR}{\mathbb{R}}
\newcommand{\bbE}{\mathbb{E}}
\newcommand{\bbN}{\mathbb{N}}
\newcommand{\scrP}{\mathcal{P}}
\newcommand{\scrM}{\mathcal{M}}
\newcommand{\calB}{\mathcal{B}}
\newcommand{\calS}{\mathcal{S}}
\newcommand{\calF}{\mathcal{F}}
\newcommand{\calL}{\mathcal{L}}
\newcommand{\bbP}{\mathbb{P}}
\newcommand{\bbQ}{\mathbb{Q}}
\newcommand{\E}[2][]{\mathbb{E}_{#1}\left[#2\right]}
\newcommand{\crochet}[1]{\left\langle{#1}\right\rangle}
\newcommand{\accro}[1]{\left\{{#1}\right\}}
\newcommand{\abs}[1]{\left| {#1} \right|}
\newcommand{\wt}[1]{\widetilde{#1}}
\newcommand{\ind}[1]{\mathbbm{1}_{\accro{#1}}}
\title{Asymptotic Optimal Tracking: Feedback Strategies}
\date{}
\author{Jiatu Cai$^{1}$, Mathieu Rosenbaum$^{2}$ and Peter Tankov$^{1}$
\\[0.4cm]
{\normalsize
$^1$ Laboratoire de Probabilit\'es et Mod\`eles Al\'eatoires,} \\ {\normalsize
    Universit\'e Paris Diderot (Paris 7)} \\
$~~$\\
{\normalsize $^2$ Laboratoire de Probabilit\'es et Mod\`eles Al\'eatoires,} \\ {\normalsize
    Universit\'e Pierre et Marie Curie (Paris 6)}
}
\begin{document}
\maketitle
\begin{abstract}This is a companion paper to
  \cite{cai2015asymptotic}. 
We consider a class of strategies of feedback form for the problem of
tracking and study their performance under the asymptotic framework of
the above reference. The strategies depend only on the current state
of the system and keep the deviation from the target inside a time-varying domain. Although the dynamics of the target is non-Markovian, it turns out that such strategies are asympototically optimal for a large list of examples.  \\\\
\noindent \textbf{Key words:}\ {Tracking problem, asymptotic optimality, feedback strategies}
\end{abstract}


\section{Introduction}

\noindent We consider the problem of tracking a target whose dynamics $(X^\circ_t)$ is modeled by a continuous It\=o semi-martingale defined on a filtered probability space $(\Omega, \calF, (\calF_t)_{t\geq 0}, \bbP)$  with values in $\bbR^d$ such that
\begin{equation*}
dX^\circ_t =  b_t dt+ \sqrt{ a}_t dW_t,\quad X^\circ_0= 0.
\end{equation*}
Here, $(W_t)$ is a $d$-dimensional Brownian motion and $(b_t)$,
$(a_t)$ are  predictable processes with values in $\bbR^d$ and
$\calS^d_+$, the set of $d\times d$ symmetric positive definite
matrices, respectively. An agent observes $X^\circ_t$  and applies a
control $\psi_t$ to adjust her position $Y^\psi_t$ in order to follow
$X^\circ_t$.  The deviation from the target is denoted by  $X_t = -
X^\circ_t + Y^\psi_t$. The agent has to pay certain intervention costs for
position adjustments and aims to minimize the functional 
\begin{align}
J(\psi) = H_0(X) + H(\psi), \label{minfunc}
\end{align}
where $H(\psi)$ is the cost functional depending on the type
of control and $H_0$ is the deviation penalty
\begin{equation*}
H_0(X) = \int_0^T r_tD( X_t)dt,
\end{equation*}
where $(r_t)$ is a random weight process and $D(x)$ a deterministic
function. In the small-cost asymptotic of \cite{cai2015asymptotic}
(recalled below), minimization of the functional \eqref{minfunc} is
possible in the pathwise sense. \\

\noindent The agent has at her disposal several types of control with
associated cost functionals. 
\begin{itemize}
\item With \emph{impulse control} the position of the agent is given
  by \begin{equation*}
Y_t = \sum_{0< \tau_j \leq t} \xi_j
\end{equation*}
and the cost functional is
\begin{equation*}
H(\psi) = \sum_{0< \tau_j \leq T} k_{\tau_j}F( \xi_j).
\end{equation*}
Here, $\{\tau_j,j\in \bbN^*\}$ is a strictly increasing sequence of
stopping times representing the jump times and satisfying $\lim_{j\to
  \infty} \tau_j = +\infty$, for each $j\in \bbN^*$, $\xi_j\in
\bbR^d$ is a $\calF_{\tau_j}$-measurable random vector representing
the size of $j$-th jump, $(k_t)$ is a random weight process and $F(\xi)>0$ is the cost of a jump.
\item With \emph{singular control}, the position of the agent is given
  by 
\begin{equation*}
Y_t = \int_0^t \gamma_s d\varphi_s,
\end{equation*}
and the corresponding cost is
\begin{equation*}
H(\psi) = \int_0^T h_t P(\gamma_t) d\varphi_t.
\end{equation*}
Here, $\varphi$ is a progressively measurable increasing process with
$\varphi_{0-}=0$, which represents the cumulated amount of
intervention, $\gamma$ is a progressively measurable process with
$\gamma_t \in \Delta:=\{n\in \bbR^d |\sum_{i=1}^d |n^i| = 1\}$ for all
$t\geq 0$, which represents the distribution of the control effort in
each direction, $(h_t)$ is a random  weight process and $P(\cdot)$
determines the cost of applying the control in a given direction.  
\item With \emph{regular control} the position of the agent is given by
\begin{equation*}
Y_t = \int_0^t u_s ds,
\end{equation*}
and the cost functional is
\begin{equation*}
H(\psi) = \int_0^T l_t Q( u_t) dt.
\end{equation*}
Here, $u$ is a progressively measurable integrable process
representing the speed of the agent, $(l_t)$ is a random weight
process and $Q(\cdot)$ is the running cost function. 
\end{itemize}

\noindent{Throughout this paper we assume that the cost functions $D$, $Q$, $F$, $P$ verify the following homogeneity property}
\begin{equation*}
D(\varepsilon x) = \varepsilon^{\zeta_D}D(x), \quad Q(\varepsilon u) = \varepsilon^{\zeta_Q}Q(u), \quad F(\varepsilon \xi) = \varepsilon^{\zeta_F}F(\xi), \quad P(\varepsilon \xi) = \varepsilon^{\zeta_P} P(\xi),
\end{equation*}
with $\zeta_D >0$, $\zeta_Q > 1$, $\zeta_F=0$, $\zeta_P = 1$. For example, we could take
\begin{equation*}
D(x) = x^T \Sigma^D x, \quad Q(u) = u^T \Sigma^Q u, \quad F(\xi) = \sum_{i=1}^d F_i \ind{\xi^i \neq 0}, \quad P(\xi) = \sum_{i=1}^d P_i |\xi^i|,
\end{equation*}
with $\min_i F_i >0$ and $\Sigma^D, \Sigma^Q\in \calS^d_+$ such that
$\zeta_D=\zeta_Q = 2$. 
We refer to \cite{cai2015asymptotic} for a more detailed description
of the setting as well as for a review of literature on the tracking
problem and on its applications to mathematical finance.  \\

\noindent Finding optimal control
policies for such systems, which have general non-Markovian dynamics
and complex cost structures, is in general infeasible and may not even
make sense in the pathwise setting. However, in
\cite{cai2015asymptotic} we were able to establish a lower bound for
the best achievable asymptotic performance under
a suitable asymptotic framework. We have shown that the lower bound is related to the time-average
control of Brownian motion.  It is then natural to try to construct
tracking policies that are (near-)optimal by suitably adapting the
solution of time-average control problem of Brownian motion. We will
show that this is indeed possible when the latter is
available. However, closed-form solutions of the time-average control
of Brownian motion are rarely available, and computing numerical
solutions may be costly. From a practical viewpoint, it is also often irrelevant to find \emph{the} optimal strategy since there are already many approximations in the model. \\

\noindent The aim of this paper is therefore to introduce and study feedback
strategies which are easy to implement in practice and whose
asymptotic performance is close to the lower bound of
\cite{cai2015asymptotic}. Here, ``feedback''
means that the control decision is Markovian, depending only on the
current state of the system. 
\\

\noindent We consider three control settings (combined regular and impulse
control; combined regular and singular control; only regular control),
and for each setting introduce a class of feedback strategies, and a
corresponding small cost asymptotic framework, which allows to
establish the convergence of the renormalized cost functional to a
well-defined limit under suitable assumptions. Comparing this limit with the lower bound of
\cite{cai2015asymptotic}, we can prove the asymptotic optimality of
the feedback strategy in a variety of examples, and quantify the
``asymptotic suboptimality'' in the situations when the
asymptotic optimality does not hold. \\

\noindent \emph{Notation.} 
 We denote the graph of an application $M: E \to E'$ by $M^g$. Let $(E, d)$ be a complete separable metric space. $\scrM(E)$ denotes the space of Borel measures on $E$ equipped with weak topology and $\scrP(E)$ denotes the subspace of probability measures. The Hausdorff distance on the space of closed sets is given by
\begin{equation*}
H(A, B) = \inf \{\delta, B\subseteq V_\delta(A)\text{ and }A\subseteq V_\delta(B)\},
\end{equation*}
where $V_\delta(\cdot)$ denotes the $\delta$-neighborhood of a closed
set, i.e. $V_\delta(A)= \{x\in E, d(x, A)\leq  \delta\}$. We shall
also sometimes need the standard notation from classical EDP theory: for a domain
$\Omega$, we denote by $C^{l+\alpha}(\Omega)$ the class of functions
which are bounded and $\alpha$-H\"older continuous on $\Omega$
together with their first $l$ derivatives, and we say that the domain
is of class $C^{l+\alpha}$ if its boundary may be defined (in local
coordinates) by functions of class $C^{l+\alpha}$. See, e.g.,
\cite{ladyzhenskaya1968linear} for precise definitions. \\

\section{Combined regular and impulse control}
 In this section we focus on the case of combined regular and impulse control
where the deviation $(X_t)$ from the target is given by 
\begin{equation*}
X_t = -X_t^\circ +\int_0^t u_s ds +  \sum_{j:0<\tau_j \leq t}\xi_j.
\end{equation*}

\subsection{Feedback strategies}
Motivated by various results in the literature and by the strategies
used by traders in practice, we consider a class of \emph{feedback
  strategies}. Under our asymptotic setting, an appropriate feedback strategy should depend on time. 
More precisely, let $(G_t)$ be a time-dependent random open bounded
domain in $\mathbb R^d$, $\xi_t$ be a time-dependent random function
such that $x\mapsto x+ \xi(x) \in C^0( \partial G_t , G_t)$, and
$(U_t)$ be a time-dependent bounded random function such that $U_t\in
C^0(\overline{G}_t, \bbR^d)$.  We say that the triplet $(U_t, {G}_t,
\xi_t)$ is continuous if $(U^g_t, \partial{G}_t, \xi^g_t)$ is
continuous as closed set-valued processes w.r.t.~the Hausdorff
distance $H$ and that  $(U_t, G_t, \xi_t)$ is progressively measurable, if $(U^g_t, \partial{G}_t, \xi^g_t)$ is progressively measurable w.r.t.~$(\calF_t)$ as closed set-valued processes (see \cite{kisielewicz2013stochastic} for more details).\\

\noindent \emph{In the sequel we will consider $(U_t, G_t,\xi_t)$ which is continuous and progressively measurable. } Intuitively, we require that the data $(U_t, G_t,  \xi_t)$ is determined in a non-anticipative way, based on information up to time $t$, and does not vary too much in time.  Note that since $\partial G_t$ is continuous, the topology of the domain $G_t$ (number of holes, boundedness, etc. ) remains the same. \\

Given a triplet $(U_t,G_t,\xi_t)$, we consider a family of stopping
times $(\tau_k)_{k\geq 0}$ and a family of processes $(\widehat
X^{(k)}_t)^{k\geq 0}_{\tau_k \leq t\leq \tau_{k+1}}$ satisfying 
\begin{align*}
&\tau_0 = 0,\quad \tau_{k+1} = \inf\{t>\tau_k: \widehat
  X^{(k)}_t\notin G_t\}\quad \text{for} \quad k\geq 0,\\
&
\widehat X^{(0)}_0 = 0,\quad d\widehat X^{(k)}_t = - dX^{\circ}_t + U_t(\widehat X^{(k)}_t) dt,\quad
\widehat X^{(k+1)}_{\tau_{k+1}} = \widehat X^{(k)}_{\tau_{k+1}} +
\xi_{\tau_{k+1}}(\widehat X^{(k)}_{\tau_{k+1}})  \quad \text{for}\quad
k\geq 0. 
\end{align*}
The existence of such processes will be discussed below. 
We now define the controlled deviation process by 
$$
X^{(U,G,\xi)}_t = \sum_{k\geq 0} \widehat X^{(k)}_t\mathbf
1_{\tau_k \leq t< \tau_{k+1}}
$$
and the corresponding boundary chain by 
$$
Y^{(U,G,\xi)}_j = \widehat X^{(j)}_{\tau_j},\quad j\geq 1. 
$$

\noindent In other words, when the deviation $X_t$ is inside the domain $G_t$,
only the regular control is active and the tracker maintains a speed
of $U_t(X_t)$. When $X_{t-}$ touches the boundary $\partial G_t$ at time
$\tau$, a jump of size $\xi_\tau(X_{\tau-})$ towards the interior of $G_t$
takes place and the process $X_t$ takes the value $X_{\tau-} +
\xi(X_{\tau-})$ at time $\tau$. We then have
\begin{equation}
X^{(U, G, \xi)}_t = -X^\circ_t + \int_0^t U_s(X^{(U, G, \xi)}_s)ds + \sum_{ 0<\tau^{(U, G, \xi)}_j \leq t}\xi_{\tau_j}(Y^{(U, G, \xi)}_j).\label{controlx}
\end{equation}
and we define $u_t = U_t(X^{(U, G, \xi)}_t)$ and $\xi_j = \xi_{\tau_j}(Y^{(U, G, \xi)}_j)$.
\\

\noindent Recall that the asymptotic framework introduced in
 \cite{cai2015asymptotic} consists in considering the
 sequence of optimization problems indexed by $\varepsilon\to0$ with
 optimization functionals
\begin{equation}\label{eqn: cost eps}
J^\varepsilon(u^\varepsilon, \tau^\varepsilon, \xi^\varepsilon) = \int_0^T (r_t D(X_t^\varepsilon) + \varepsilon^{\beta_Q}l_t Q(u^\varepsilon_t)) dt + \sum_{j: 0 <\tau^\varepsilon_j \leq T}( \varepsilon^{\beta_F}k_{\tau^\varepsilon_j}F(\xi_j^\varepsilon) + \varepsilon^{\beta_P}h_{\tau^\varepsilon_j}P(\xi_j^\varepsilon)),	
\end{equation}
where
\begin{equation*}
X_t^\varepsilon = -X_t^\circ +\int_0^t u^\varepsilon_s ds +  \sum_{j:0<\tau^\varepsilon_j \leq t}\xi_j^\varepsilon,
\end{equation*}
and $\beta_Q$, $\beta_F$ and $\beta_P$ are real numbers such that 
$$
\frac{\beta_F}{\zeta_D+2-\zeta_F} = \frac{\beta_P}{\zeta_D+2-\zeta_P}
= \frac{\beta_Q}{\zeta_D+ \zeta_Q} = \beta
$$
for some $\beta>0$. \\

\noindent Given a triplet $(U_t,G_t,\xi_t)$, we define $U^\varepsilon_t(x)  =
{\varepsilon^{-(\alpha-1)\beta}}U_t(\varepsilon^{-\beta}x)$,
$\xi_t^\varepsilon(x) = \varepsilon^{ \beta} \xi_{t}({\varepsilon^{-
    \beta}} x)$  and $G^{\varepsilon}_t = \varepsilon^{\beta} G_t$,
and construct the sequence of controlled processes and feedback strategies $(X^\varepsilon,
u^\varepsilon, \tau^\varepsilon, \xi^\varepsilon)$ as in \eqref{controlx}.
We make the following assumption.
\begin{asmp}\label{asmp: existence}
The controlled deviation process and the feedback strategy $(X^\varepsilon, u^\varepsilon, \tau^\varepsilon, \xi^\varepsilon)$ exist and are unique 
for each $\varepsilon >0$.
\end{asmp}

\noindent A rigorous verification of the above definition requires detailed
analysis with specific conditions on $(U_t,G_t, \xi_t)$. We  now describe a simple situation where the above assumption is verified. 
\begin{lem}\label{existence}
Let $(U_t)$ be locally Lipschitz, and assume that there exists a
\emph{potential function} $V$ for the jump rule, i.e. 
\begin{align}\label{eqn: bound K}
&|U_t(x) - U_t(y)|\leq K_t |x- y|, \quad \forall x, y\in \bbR^d, \forall (t, \omega), \\
&V(x+ \xi_t(x)) - V(x)<0, \quad \forall (t, \omega),\label{potential}
\end{align}
where $K_t$ is locally bounded and  $V \in C^2(\mathbb R^d,\mathbb
R)$. 
Then Assumption \ref{asmp: existence} holds.  
\end{lem}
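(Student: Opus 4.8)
The plan is to build the family $(\tau_k,\widehat X^{(k)})_{k\ge 0}$ inductively and to reduce the claim to two points: (i) on each stretch $[\tau_k,\tau_{k+1})$ the equation $d\widehat X^{(k)}_t=-dX^\circ_t+U_t(\widehat X^{(k)}_t)\,dt$ has a unique non-exploding solution and $\tau_{k+1}$ is a stopping time with $\tau_{k+1}>\tau_k$; and (ii) the jump times do not accumulate, i.e. $\tau_k\uparrow+\infty$ a.s. Point (i) is routine SDE/exit-time bookkeeping; the real work is (ii), where the potential $V$ is precisely the device that rules out ``Zeno'' behaviour. I would also first note that it suffices to prove well-posedness for a \emph{generic} triplet obeying the hypotheses of the lemma, since for every $\varepsilon>0$ the rescaled triplet $(U^\varepsilon_t,G^\varepsilon_t,\xi^\varepsilon_t)$ again has $U^\varepsilon_t(\cdot)=\varepsilon^{-(\alpha-1)\beta}U_t(\varepsilon^{-\beta}\,\cdot)$ locally Lipschitz in $x$ and admits the $C^2$ potential $V^\varepsilon(x):=V(\varepsilon^{-\beta}x)$, because $x+\xi^\varepsilon_t(x)=\varepsilon^{\beta}(\varepsilon^{-\beta}x+\xi_t(\varepsilon^{-\beta}x))\in\varepsilon^{\beta}G_t=G^\varepsilon_t$ for $x\in\partial G^\varepsilon_t$; Assumption~\ref{asmp: existence} then follows by applying the result to each rescaled system.

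For point (i): fixing $\omega$ and $k$ and treating $t\mapsto X^\circ_t-X^\circ_{\tau_k}$ as a given continuous driving path, $\widehat X^{(k)}$ solves an equation whose only state-dependent coefficient is the drift $U_t(\cdot)$, Lipschitz in $x$ with locally bounded constant $K_t$ by \eqref{eqn: bound K}; standard arguments give a unique maximal solution, and as long as it stays in the bounded set $\overline G_t$ it cannot explode, so it is defined until it exits $G_t$. Progressive measurability of $(G_t)$ together with continuity of $\widehat X^{(k)}$ and of $(\partial G_t)$ makes $\tau_{k+1}=\inf\{t>\tau_k:\widehat X^{(k)}_t\notin G_t\}$ a stopping time (possibly $+\infty$, which is consistent with $\lim_j\tau_j=+\infty$), and forces $\widehat X^{(k)}_{\tau_{k+1}}\in\partial G_{\tau_{k+1}}$ when $\tau_{k+1}<\infty$; since $x\mapsto x+\xi_t(x)\in C^0(\partial G_t,G_t)$, the post-jump point $\widehat X^{(k+1)}_{\tau_{k+1}}$ lands in the \emph{open} set $G_{\tau_{k+1}}$, so $\tau_{k+2}>\tau_{k+1}$ by continuity once more. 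Pathwise uniqueness of each stretch and the deterministic jump rule give uniqueness of $(X^\varepsilon,u^\varepsilon,\tau^\varepsilon,\xi^\varepsilon)$.

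The heart of the proof is point (ii). Suppose, for contradiction, that $\tau_\infty:=\lim_k\tau_k\le T$ with positive probability. On $[0,T]$ the (random) set $\mathcal K:=\overline{\bigcup_{t\le T}\overline G_t}$ is compact, because $(\partial G_t)$ is Hausdorff-continuous and the topology of $G_t$ is fixed, and by construction the concatenated process $\widehat X$ stays in $\mathcal K$. Applying It\^o's formula to $V(\widehat X_\cdot)$ between jumps and summing gives, for $M_t:=-\int_0^t\nabla V(\widehat X_s)^\top\sqrt a_s\,dW_s$,
\begin{equation*}
V(\widehat X_{\tau_n})=V(0)+\int_0^{\tau_n}\Bigl(\nabla V(\widehat X_s)^\top(U_s(\widehat X_s)-b_s)+\tfrac12\,\mathrm{tr}\bigl(a_s\,D^2V(\widehat X_s)\bigr)\Bigr)ds+M_{\tau_n}+\sum_{k=1}^{n}\Bigl(V\bigl(\widehat X_{\tau_k-}+\xi_{\tau_k}(\widehat X_{\tau_k-})\bigr)-V(\widehat X_{\tau_k-})\Bigr).
\end{equation*}
On $\mathcal K$ the functions $V,\nabla V,D^2V$ are bounded, $U$ is bounded by assumption, and $\int_0^T(|b_s|+|a_s|)\,ds<\infty$ since $X^\circ$ is an It\^o semimartingale; hence the Lebesgue integral converges absolutely as $n\to\infty$ and $\langle M\rangle_{\tau_\infty}<\infty$, so $M_{\tau_n}$ converges a.s. As $V(\widehat X_{\tau_n})$ stays bounded, the series of the strictly negative jump increments converges, so its terms tend to $0$. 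But $\widehat X_{\tau_k-}\in\partial G_{\tau_k}$; by compactness there is a subsequence along which $(\tau_k,\widehat X_{\tau_k-})\to(\tau_\infty,x_*)$ with $x_*\in\partial G_{\tau_\infty}$ (Hausdorff-continuity of $(\partial G_t)$), and then Hausdorff-continuity of the graphs $(\xi^g_t)$ together with continuity of $\xi_{\tau_\infty}$ on $\partial G_{\tau_\infty}$ yields $\xi_{\tau_k}(\widehat X_{\tau_k-})\to\xi_{\tau_\infty}(x_*)$, so along that subsequence
\begin{equation*}
V\bigl(\widehat X_{\tau_k-}+\xi_{\tau_k}(\widehat X_{\tau_k-})\bigr)-V(\widehat X_{\tau_k-})\;\longrightarrow\;V\bigl(x_*+\xi_{\tau_\infty}(x_*)\bigr)-V(x_*)<0
\end{equation*}
by \eqref{potential}, the desired contradiction. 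Hence $\tau_k\uparrow+\infty$ a.s., the construction is well defined on $[0,T]$, and Assumption~\ref{asmp: existence} holds. The main obstacle, as indicated, is exactly this non-accumulation step, and more precisely passing to the limit in the jump increments using only the continuity of the set-valued data $(G_t,\xi_t)$; everything else is standard manipulation of Lipschitz SDEs and exit times.
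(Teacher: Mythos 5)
Your proof is correct and rests on the same core device as the paper's: Lipschitz theory gives each inter-jump stretch, and It\^o's formula applied to the potential $V$ rules out accumulation of the jump times. The only difference is in the finishing step: the paper localizes so that each jump decrement is uniformly bounded by $-\delta<0$ and reads off $\delta n\le C(1+\tau^\varepsilon_n)+M_{\tau^\varepsilon_n}$, whereas you argue by contradiction at a hypothetical finite accumulation time and recover strict negativity in the limit via compactness and the Hausdorff continuity of $(\partial G_t,\xi^g_t)$ --- both implementations are sound, and yours has the minor merit of making explicit the rescaled potential $V(\varepsilon^{-\beta}\,\cdot)$ and the role of the standing continuity assumption on the triplet.
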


\begin{proof}
Modulo a localization procedure, we may assume that $K_t$, $b_t$ and $a_t$ are bounded
and $V(x+ \xi_t(x)) - V(x)<-\delta <0$ for all $(t,\omega)$ and all
$x\in \partial G_t$, where $\delta$
is a constant. Similarly, the function $V$ as well as its first and second derivatives may
be assumed to be bounded when their argument belongs to $G_t$. \\

\noindent For the existence of $\widehat X^{(k)}$ for each $k$ (in the strong sense), Lipschitz-type regularity on $U_t$ is sufficient (see \cite[Chapter V]{protter2004stochastic} for more details). For the existence of $X^\varepsilon$ on the whole horizon $[0, T]$, it suffices to show that
\begin{equation*}
\lim_{j\to \infty} \tau^\varepsilon_j = +\infty 
\end{equation*}
almost surely. 
By It\^o's lemma,
\begin{align*}
V(X^\varepsilon_{\tau^\varepsilon_n}) - V(0) &=
\int_0^{\tau^\varepsilon_n}
\Big\{(U(X^\varepsilon_t)-b_t)\nabla V(X^\varepsilon_t)
  +\frac{1}{2}\sum_{ij}{a_{ij,t}}\partial^2_{ij} V (X^\varepsilon_t) \Big\}dt \\&-
\int_0^{\tau^\varepsilon_n} \nabla V(X^\varepsilon_t)^T \sqrt{a_t} dW_t + \sum_{k=1}^n
\{V(X^\varepsilon_{\tau^\varepsilon_k}) - V(X^\varepsilon_{\tau^\varepsilon_{k}-})\},
\end{align*}
so that 
$$
\delta n \leq C(1+ \tau^\varepsilon_n) + \int_0^{\tau^\varepsilon_n}
\nabla V(X^\varepsilon_t)^T \sqrt{a_t} dW_t
$$
for all $n\geq 1$, 
which is only possible if $\tau_n^\varepsilon \to +\infty$ a.s.

\end{proof}

\subsection{Asymptotic performance}

In order to have well-behaved strategies, we restrict ourselves to the following class of admissible triplets $(U_t, G_t, \xi_t)$.

\begin{dfn}[Admissible Strategy] The triplet $(U_t, G_t, \xi_t)$ is
  said to be admissible if the following conditions hold true. 
\begin{enumerate}

\item
(Potential) There exists $V\in C^2(\bbR^d, \bbR)$ such that
\eqref{potential}
 is satisfied. 
\item 
(Separability) For any $(t, \omega)$, let $U = U^\omega_t$,
$G=G_t^\omega$ and $\xi= \xi_t^\omega$. Then there exists a unique couple $(\pi, \nu)\in \scrP(\overline{G}) \times \scrM(\partial G)$ verifying the constraints
\begin{equation}\label{eqn: separability}
\int_{\overline{G}}A_U^af(x, u)\pi(dx) + \int_{ \partial G}B_\xi f(x)\nu(dx) = 0, \quad \forall f\in C^2_0(\bbR^d),
\end{equation}
where 
\begin{equation*}
A_U^{a}f(x) = \frac{1}{2}\sum_{i, j}a_{ij}\partial^2_{ij} f(x) + {U(x)\cdot \nabla f(x)}, \quad B_\xi f(x) = f(x+\xi(x)) - f(x).
\end{equation*} 
We note $\pi =: \pi^{(a, U, G, \xi)}$ and $\nu =: \nu^{(a, U, G,\xi)}$ to indicate the dependence on $(a, U, G, \xi)$. 
\end{enumerate}
\end{dfn}

\noindent The separability assumption is related to the existence and uniqueness
of a stationary distribution for the diffusion with jumps from the
boundary. The following lemma gives an example of the situation where
this assumption holds.

\begin{lem}\label{separ.jump}
Let $(a_{ij})$ be positive definite, let $G$ be a connected bounded domain of
$\mathbb R^d$ of class $C^{2+\alpha}$ for some $\alpha>0$, let $U$ be
Lipschitz and let $\xi: \partial G \to G$ be continuous.  Assume that
there exists a function $V\in C^2(\mathbb R^d)$ such that
\eqref{potential} is satisfied. 
Then there exists a unique couple $(\pi, \nu)\in
\scrP(\overline{G}) \times \scrM(\partial G)$ verifying the constraints \eqref{eqn: separability}.
\end{lem}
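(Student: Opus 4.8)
The plan is to realize the pair $(\pi,\nu)$ as the occupation measure and the normalized exit-measure of the stationary version of the diffusion-with-jumps-from-the-boundary, and to derive uniqueness from irreducibility of this process. First I would construct the process: for a fixed $(t,\omega)$, start the SDE $dZ_s = U(Z_s)\,ds + \sqrt{a}\,dB_s$ inside $G$ (here $a$ is constant and positive definite, $U$ Lipschitz, so strong existence holds), run it until the first exit time $\theta_1 = \inf\{s>0: Z_s \notin G\}$, which is finite and has finite expectation because $G$ is bounded and $a$ is uniformly elliptic, then restart at $Z_{\theta_1} + \xi(Z_{\theta_1}) \in G$ and iterate. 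The potential function $V$ with $V(x+\xi(x))-V(x)<0$ on $\partial G$, together with the argument already used in the proof of Lemma~\ref{existence}, shows the exit times do not accumulate, so the process is well-defined for all $s\geq 0$. This defines a Markov process on $\overline{G}$ whose generator acts as $A_U^a$ in the interior with the instantaneous jump rule $B_\xi$ at the boundary; the constraint \eqref{eqn: separability} is exactly the stationarity (adjoint) equation for this process, with $\pi$ the stationary occupation law on $\overline G$ and $\nu$ the (rate-weighted) stationary measure of boundary hits.

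The key step is a Harris-type ergodic argument. Because $a$ is positive definite and $U$, $\xi$ are continuous with $\xi$ mapping into the open set $G$, the process started from any point reaches, within one excursion, any open subset of $G$ with positive probability and has a density on $G$ after a fixed elapsed time; hence the process is $\varphi$-irreducible and aperiodic on the compact state space $\overline G$. Combined with the non-accumulation of jump times (so that the total boundary-hit rate over any bounded time window is integrable, using the $C^{2+\alpha}$ smoothness of $\partial G$ to control the regularity of the exit distribution), standard positive-Harris recurrence theory yields a unique invariant probability measure. I would then set $\pi$ equal to this invariant law and $\nu(dy) = \lambda\, \mathbb E_\pi[\#\{\text{hits at }y\text{ per unit time}\}]$, i.e. the expected boundary local time / cycle length, and check by a renewal-reward (ratio-ergodic) identity together with It\^o's formula applied to a cut-off of each $f\in C_0^2$ that the pair $(\pi,\nu)$ indeed solves \eqref{eqn: separability}. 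Uniqueness transfers: any solution $(\pi,\nu)$ of \eqref{eqn: separability} can be shown, by testing against a rich class of $f$ and using the ellipticity to rule out concentration on $\partial G$, to be an invariant pair for the same process, hence equal to the one just constructed.

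The main obstacle I expect is regularity at the boundary: to make the renewal-reward identification rigorous one needs that $\nu$ is a genuine finite measure, which amounts to showing the expected number of boundary hits per unit time is finite, and that the exit distribution from $G$ under the diffusion is well-behaved (absolutely continuous with a controlled density on $\partial G$). This is where the $C^{2+\alpha}$ hypothesis on $\partial G$ and uniform ellipticity of $a$ enter — they give, via classical Schauder estimates for the associated elliptic boundary value problem (e.g. solving $A_U^a w = -1$ in $G$, $w=0$ on $\partial G$, with $w\in C^{2+\alpha}(\overline G)$), both a quantitative lower bound on $\mathbb E_x[\theta_1]$ bounded away from zero and the needed regularity of harmonic-measure-type quantities on $\partial G$. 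A secondary technical point is verifying that one may restrict the test functions in \eqref{eqn: separability} from $C_0^2(\mathbb R^d)$ to functions adapted to $\overline G$ without losing information — this follows by an extension/truncation argument since $\overline G$ is compact and $\xi$ maps $\partial G$ into the open set $G$.
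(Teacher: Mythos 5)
Your construction of the diffusion with jumps from the boundary and of the candidate pair $(\pi,\nu)$ is in the same spirit as the paper's existence argument (the paper takes Ces\`aro limits of the occupation measure and of the boundary-hit measure, using the potential $V$ from \eqref{potential} to bound $\mathbb E[N_t]$ and get tightness); that half could be completed along your lines, although your Harris-recurrence route asks for more than is needed. The genuine gap is in the uniqueness half. You assert that any solution $(\pi,\nu)$ of \eqref{eqn: separability} ``can be shown, by testing against a rich class of $f$ and using the ellipticity\dots to be an invariant pair for the same process, hence equal to the one just constructed.'' That transfer --- from a pair of measures solving the linear adjoint constraint to the stationary occupation/boundary-hit pair of a specific process --- is precisely the nontrivial content of the uniqueness claim, and nothing in your sketch supplies it: Harris uniqueness of the invariant law of the process you built says nothing by itself about other solutions of \eqref{eqn: separability}, and there is no off-the-shelf Echeverr\'{\i}a-type stationarity theorem covering instantaneous jumps from the boundary that you could quote without verification. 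Note also that $\nu$ is an unnormalized element of $\scrM(\partial G)$, so even after identifying its ``shape'' you must pin down its total mass, a point your sketch leaves open.

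The paper closes exactly this gap with a concrete choice of test functions, which is the step you are missing: for boundary data $g\in C^{2+\alpha}(\partial G)$ it takes $f=F^g$, the Schauder solution of $A^a_U F^g=0$ in $G$ with $F^g=g$ on $\partial G$, extended to $C^2_0(\mathbb R^d)$; the representation $F^g(x)=\mathbb E^x[g(\overline X_{\tau_1-})]$ turns \eqref{eqn: separability} into the invariance equation $\nu(dx)=\int \nu(dy)p(y,dx)$ for the boundary chain, whose ergodicity is obtained from Doeblin's condition via Harnack's inequality (this is the rigorous form of your irreducibility remark); a test function with $A^a_U f=1$ on $\overline G$ then fixes the total mass of $\nu$, and solving $A^a_U f=g$ for arbitrary $g\in C^\alpha(\overline G)$ recovers $\pi$ from $\nu$, giving uniqueness of $\pi$ as well. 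Unless you either reproduce such a PDE-based reduction or prove a stationarity theorem adapted to jumps from the boundary (in the spirit of Kurtz--Stockbridge, which the paper invokes only in the reflection case), your uniqueness argument does not go through.
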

\begin{proof}
Let $\overline X$ denote the controlled deviation process with dynamics
\begin{align}
d\overline X_t = \sqrt{a} dW_t + U(\overline X_t) dt,\label{dyn}
\end{align}
on $G$ and jumps from the boundary $\partial G$ defined by the mapping $\xi$.
This process is defined in the same way as \eqref{controlx}.  We also
denote by $(\bar \tau_j)_{j\geq 1}$ the corresponding jump times and
by $(\overline Y_j)_{j\geq 1}$ the corresponding boundary chain. By
continuity, we may assume that $V(x+ \xi(x)) - V(x)<-\delta <0$ for
all $x\in \partial G$, where $\delta$ is a constant. \\

\noindent For measurable subsets $\mathcal A\subset \overline G$,
$\mathcal B\subset
\partial G$, define, for all $t>0$,
$$
\pi_t(\mathcal A) = \frac{1}{t}\mathbb E\Big[\int_0^t \mathbf 1_{\mathcal
  A}(\overline X_s) ds\Big],\qquad
\nu_t(\mathcal B) = \frac{1}{t}\mathbb E\Big[\sum_{j=1}^{N_t} \mathbf
1_{\mathcal B}(\overline X_{\bar\tau_j-})\Big], 
$$
where $N_t = \sum_{j\geq 1}\mathbf 1_{\bar\tau_j\leq t}$. 
By It\^o's lemma, 
\begin{align*}
V(\overline X_{t}) - V(0) &=
\int_0^{t}
\Big\{U(\overline X_s)\nabla V(\overline X_s)
  +\frac{1}{2}\sum_{ij}{a_{ij}}\partial^2_{ij} V (\overline X_s) \Big\}ds \\&-
\int_0^{t} \nabla V(\overline X_s)^T \sqrt{a} dW_s + \sum_{k=1}^{N_t}
\{V(\overline X_{\bar\tau_k}) - V(\overline X_{\bar\tau_{k}-})\},
\end{align*}
so that there exists a constant $C<\infty$ such that  
$$
\delta N_t \leq C(1+ t) + \int_0^{t}
\nabla V(\overline X_s)^T \sqrt{a} dW_s\quad \text{and}\quad
\delta \mathbb E N_t \leq C(1+ t),
$$
which shows that  $(\nu_t)$ is tight. On the other hand, $(\pi_t)$ is tight as a
family of probability measures on a bounded set. Therefore, there
exist measures $\pi^* \in \mathcal P(\overline G)$ and $\nu^*\in
\mathcal M(\partial G)$ and a sequence $(t_n)_{n\geq 1}$ converging to
infinity such that $\nu_{t_n}\to \nu^*$ and $\pi_{t_n}\to \pi^*$
weakly as $n\to\infty$. \\

\noindent Let $f\in C^2_0(\mathbb R^d)$. By It\^o's lemma, for every $n\geq 1$, 
$$
\mathbb E[f(\overline X_{t_n})] = \mathbb E[f(\overline X_0)] -
t_n\int_{\overline G} A^a_U f(x) \pi_{t_n}(dx) - t_n\int_{\partial G}
B_\xi f(x) \nu_{t_n}(dx).
$$
Dividing by $t_n$ on both sides and making $n$ tend to infinity, 
we conclude that 
$$
\int_{\overline G} A^a_U f(x) \pi^*(dx) +\int_{\partial G} Bf(x)
\nu^*(dx) = 0,
$$
which proves existence of the couple $(\pi,\nu)$ satisfying \eqref{eqn: separability}.\\

\noindent  Let us now show that this couple is unique.
By classical results on elliptic equations (see e.g., \cite[chapter 3]{ladyzhenskaya1968linear}),
 for every $g\in C^{2+\alpha}(\partial G)$,
there exists a unique solution $F^g \in C^{2+\alpha}(\overline G)$ of
the equation $A^a_U F^g = 0$ on $\overline G$ with the boundary
condition $F^g = g$ on $\partial G$, which may be extended to a
$C^2_0(\mathbb R^d)$ function. Moreover, this solution may be
represented as follows: 
$$
F^g(x) = \mathbb E^x[g(\overline X_{\tau_1-})],
$$
where $\mathbb E^x$ denotes the expectation with initial value $x\in
G$. 
Substituting the function $F^g$ into \eqref{eqn:
  separability}, we obtain
\begin{align}
\int_{\partial G} g(x)\nu(dx) = \int_{\partial G} F^g(x+ \xi(x))\label{invmeasure}
\nu(dx).
\end{align}
Denote the transition kernel of the chain $(\overline Y_n)$ by $p(x,dy)$. Then, 
$$
F^g(x+\xi(x)) = \mathbb E[g(\overline Y_{n+1})|\overline Y_n = x] = \int_{\partial G} g(y)p(x,dy).
$$
Therefore, equation \eqref{invmeasure} (satisfied for all $g$) is
equivalent to 
$$
\nu(dx) = \int_{\partial G} \nu(dy) p(y,dx),
$$
which means that $\nu$ is uniquely defined up to a multiplicative
constant if and only if the Markov chain $(\overline Y_k)$ is
ergodic. \\

\noindent Let $G'\subset\subset G$ be a connected open domain such that
$x+\xi(x)\in G'$ for all $x\in \partial G$ (such a domain exists since
$\xi$ is continuous and $\partial G$ is closed) and choose $x_0 \in
G'$. Then, for a Borel subset $A\subset \partial G$, by Harnack's inequality,
$$
\max_{x\in \partial G} \int_A p(x,dy) = \max_{x\in \partial G} \mathbb
P[\overline X_{\tau_1-}^{x+\xi(x)}\in A] \leq \max_{x\in G'} \mathbb
P[\overline X_{\tau_1-}^{x}\in A] \leq C \mathbb P[\overline
X_{\tau_1-}^{x_0}\in A],
$$
for a constant $C<\infty$ which does not depend on $A$. Therefore, the
Markov chain $(Y_k)$ satisfies Doeblin's condition (see e.g.,
\cite{tweedie1975sufficient}), which implies ergodicity. This shows
the uniqueness of the measure $\nu$ up to a multiplicative
constant. To show that $\nu$ is unique, consider now a function $f \in
C^2_0$ such that $A^a_U f = 1$ on $\overline G$. \\

\noindent Finally, to show the uniqueness of $\pi$, one can choose a function
$f$ to be the unique solution in $C^{2+\alpha}(\overline G)$ of the
equation $A^a_U f = g$, where $g$ is an arbitrary function in
$C^\alpha(\overline G)$. 

\end{proof}





\begin{ex}\label{ex: jump}
Let $A_t$ be a continuous adapted process with values in $\calS^d_+$, 
and let $a_t$ be a continuous adapted process taking values in $[0,1)$. Define
$G_t = \{x\in \bbR^d: x^TA_tx <1\}$ and $\xi_t(x)= (1-\alpha_t)
x$. Then, condition \eqref{potential} is satisfied with $V(x) =
\|x\|^2$. 

\end{ex}



\noindent Now we state the first main result in this paper. 

\begin{theo}[Asymptotic performance for combined regular and impulse control]\label{theo: limit in proba}
Consider  an admissible triplet $(U_t, G_t, \xi_t)$, suppose that Assumption
\ref{asmp: existence} is satisfied and let $(X^\varepsilon, u^\varepsilon, \tau^\varepsilon,\xi^\varepsilon)$ be the corresponding feedback strategy. Then,
\begin{equation*}
 \frac{1}{\varepsilon^{\zeta_D \beta}} J^\varepsilon(u^\varepsilon, \tau^\varepsilon, \xi^\varepsilon) \to_p \int_0^T c(a_t, U_t, G_t, \xi_t; r_t,l_t, k_t, h_t) dt,
\end{equation*}
where $c(a, U, G, \xi; r, l, k, h)$ is given by 
\begin{align*}
c(a, U, G, \xi; r, l, k, h)& =
 \int_{\overline{G}} [ r D(x) + l Q (U(x))]\pi^{(a, U, G, \xi)}(dx) \\
&\qquad + \int_{\partial G} [k F(\xi(x)) + h P(\xi(x))] \nu^{(a,U, G, \xi)}(dx).
\end{align*}
Moreover, the convergence holds term by term for the cost functions $D$, $Q$, $F$, $P$ respectively. 
\end{theo}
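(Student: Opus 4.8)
The plan is to exploit the scaling structure and reduce the convergence to an ergodic theorem for a diffusion with jumps from the boundary, applied pathwise (conditionally) in the time variable. First I would perform the change of variables that removes the $\varepsilon$-scaling: setting $\widetilde X^\varepsilon_t = \varepsilon^{-\beta} X^\varepsilon_t$, the rescaled process lives in the fixed (in $\varepsilon$) family of domains $G_t$, is driven by $-\varepsilon^{-\beta}dX^\circ_t$ plus drift $U_t(\widetilde X^\varepsilon_t)\,dt$, and jumps according to $\xi_t$. Using the homogeneity of $D,Q,F,P$ together with the balance relations among $\beta_Q,\beta_F,\beta_P$, the normalized cost $\varepsilon^{-\zeta_D\beta}J^\varepsilon$ rewrites as a sum of four time-integrals (or jump sums) against $\widetilde X^\varepsilon$ with weights $r_t,l_t,k_t,h_t$ and no remaining powers of $\varepsilon$, so the whole statement is really about the empirical occupation measure of $\widetilde X^\varepsilon$ on $[0,T]$.

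Next I would use the separation of time scales: $X^\circ$ has finite quadratic variation, so after rescaling by $\varepsilon^{-\beta}$ the driving noise has diverging intensity, forcing the rescaled process to equilibrate on an infinitesimal time scale relative to the slowly varying coefficients $(a_t,U_t,G_t,\xi_t,r_t,l_t,k_t,h_t)$. On a short interval $[t,t+h]$ one freezes these coefficients at their time-$t$ values; then on the fast clock $\widetilde X^\varepsilon$ behaves like the stationary diffusion-with-jumps $\overline X$ of Lemma \ref{separ.jump} with parameters $(a_t,U_t,G_t,\xi_t)$, whose time-average occupation measure converges to $\pi^{(a_t,U_t,G_t,\xi_t)}$ and whose boundary-hitting empirical measure (normalized by elapsed time) converges to $\nu^{(a_t,U_t,G_t,\xi_t)}$, by the ergodicity established in that lemma. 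Summing over a partition of $[0,T]$ and sending the mesh to zero after $\varepsilon\to0$, the Riemann sums converge to $\int_0^T c(a_t,U_t,G_t,\xi_t;r_t,l_t,k_t,h_t)\,dt$; the continuity (in the Hausdorff sense) of the admissible triplet and of the coefficient processes guarantees that $c$ depends continuously on time so that these Riemann sums are well-defined and convergent. To get convergence in probability rather than merely along subsequences, I would control the fluctuations via a martingale/Gronwall estimate on $\widetilde X^\varepsilon - \overline X^{(t)}$ (the frozen-coefficient version) using the Lipschitz bound \eqref{eqn: bound K} on $U$ and the potential function $V$ from \eqref{potential} to bound the number of jumps $N^\varepsilon_t$ (exactly the $\delta N_t \le C(1+t) + \text{martingale}$ estimate appearing in the proofs above), which prevents mass escaping to the boundary and keeps the jump-cost terms tight.

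For the term-by-term claim I would simply observe that each of the four pieces is handled by the same occupation-measure convergence, with the respective continuous, bounded test functions $x\mapsto r D(x)$, $x\mapsto l Q(U(x))$ on $\overline G$, and $x\mapsto k F(\xi(x))$, $x\mapsto h P(\xi(x))$ on $\partial G$; boundedness on $\overline G$ (resp.\ $\partial G$) follows since $G_t$ is bounded, $U_t$ and $\xi_t$ are bounded, and $D,Q,F,P$ are continuous, and the $F$-term uses that $F$ is bounded even though it is only $\zeta_F=0$-homogeneous (it is a discontinuous function of $\xi$, but $\xi_t$ is a fixed continuous map so $F(\xi_t(x))$ is bounded and the weak convergence of $\nu$ handles it provided $\nu$ charges no exceptional set — which holds since $\nu$ is absolutely continuous on $\partial G$ by the elliptic regularity used in Lemma \ref{separ.jump}). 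The main obstacle will be making the ``freezing'' argument rigorous: one must show that replacing the genuinely non-Markovian, time-varying data by its time-$t$ snapshot over each small block introduces an error that vanishes uniformly in probability as $\varepsilon\to0$ then mesh$\to0$, which requires a quantitative ergodic/mixing rate for the frozen chain that is locally uniform in the parameters $(a,U,G,\xi)$ — this uniformity is where the continuity and boundedness hypotheses in the definition of admissibility, and the Doeblin condition from Lemma \ref{separ.jump}, are doing the real work, and where I expect the bulk of the technical effort to lie.
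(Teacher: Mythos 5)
Your overall picture --- rescale, exploit the separation of time scales, and identify the limit through the stationary measures of Lemma \ref{separ.jump} --- is the right one, but the step you yourself flag as ``where the bulk of the technical effort lies'' is precisely the gap, and it cannot be filled with the tools you invoke. Your plan needs (i) a coupling/Gronwall comparison between the rescaled process and a frozen-coefficient copy, using the Lipschitz bound \eqref{eqn: bound K}, and (ii) a quantitative ergodic/mixing rate for the frozen diffusion-with-jumps that is locally uniform in the parameters $(a,U,G,\xi)$. Neither is available under the hypotheses of Theorem \ref{theo: limit in proba}: admissibility only requires a potential function \eqref{potential} and \emph{uniqueness} of the couple $(\pi,\nu)$ solving \eqref{eqn: separability} --- no Lipschitz continuity of $U$ (that assumption appears only in the sufficient-condition Lemma \ref{existence}), no Doeblin condition (Lemma \ref{separ.jump} is an example where separability holds, not part of the definition), and in particular no rate of convergence to equilibrium, let alone one uniform over the random, time-varying data. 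Likewise, the absolute continuity of $\nu$ that you invoke to handle the $F$-term is not granted in general by the assumptions of the theorem.

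The paper's proof avoids rates altogether by a compactness--uniqueness (martingale-problem) argument: it works on blocks of length $\varepsilon^{2\beta}T^\varepsilon$ with the empirical occupation measure $\mu^\varepsilon_t$ and the normalized boundary-jump measure $\rho^\varepsilon_t$; the potential function alone yields, via It\^o's formula, uniform first and second moment bounds on $N^\varepsilon_{T^\varepsilon}/T^\varepsilon$, hence tightness; any (stable) limit point is shown to satisfy the constraint equation and, by continuity of the triplet, to be supported on the graphs of $U_t$ and $\xi_t$, so the separability assumption identifies it as $\bigl(\pi_t\otimes\delta_{U_t(x)},\,\nu_t\otimes\delta_{\xi_t(x)}\bigr)$; uniqueness of the limit point then gives convergence in probability for each fixed $t$ with no mixing estimate whatsoever. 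The passage from fixed $t$ to the time integral is not a Riemann-sum/freezing argument but the convergence theorem for integral functionals of Appendix \ref{sec: conv of integrals} (Lemma \ref{lem: local to global}), whose weak-tightness hypothesis (Definition \ref{dfn: weakly tight}) is verified from the $k=2$ moment bound, i.e.\ uniform square-integrability of the local costs. To obtain a proof valid under the stated assumptions you should replace your steps (i)--(ii) by this identification-of-limits scheme; otherwise you must strengthen the hypotheses (Lipschitz $U$, uniform ergodicity) beyond what the theorem assumes.
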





\begin{rmk}
In \cite{cai2015asymptotic} we have established that for any sequence
of 
admissible strategies $(u^\varepsilon,\tau^\varepsilon,\xi^\varepsilon)$,
\begin{align}
\liminf_{\varepsilon \to 0}\frac{1}{\varepsilon^{\zeta_D \beta}}
J^\varepsilon(u^\varepsilon, \tau^\varepsilon, \xi^\varepsilon) \geq_p
\int_0^T I(a_t, r_t, l_t, k_t, h_t) dt\label{lowerbound}
\end{align}
where $I= I(a, r,l, k, h) $ is given by
\begin{equation}\label{eqn: lower bound}
I = \inf_{(\mu, \rho)} \int_{\bbR^d \times \bbR^d}(r D(x) + l Q(u) )\mu(dx \times du) + \int_{\bbR^d\times \bbR^d}(kF(\xi) + h P(\xi))\rho(dx\times d\xi),
\end{equation}
with $(\mu, \rho)\in\scrP(\bbR^d\times \bbR^d)\times \scrM(\bbR^d\times \bbR^d )$ verifying the following constraints
\begin{equation*}\label{LP: constraint}
\int_{\bbR^d\times \bbR^d}A^af(x, u)\mu(dx\times du) + \int_{\bbR^d\times \bbR^d}Bf(x, \xi)\rho(dx\times d\xi) = 0, \quad \forall f\in C^2_0(\bbR^d),
\end{equation*}
where
$$
A^af(x,u) = \frac{1}{2} \sum_{i,j}a_{ij} \partial^2_{ij} f(x) + u
\cdot f(x), \qquad B f(x,\xi) = f(x+\xi)-f(x). 
$$
If we can find $(U_t, G_t, \xi_t)$ such that $c(a_t, U_t, G_t, \xi_t;
r_t; l_t, k_t, h_t) = I(a_t, r_t, l_t, k_t, h_t)$ for all $t$, then the lower bound of \cite{cai2015asymptotic} is sharp and we
say that the feedback strategy $(U_t, G_t, \xi_t)$ is
\emph{asymptotically optimal}. In particular, in the one-dimensional
case, when $D(x) = x^2$, $Q(u) = u^2$, $F(\xi) = 1$ and $P(\xi) =
|\xi|$, the lower bounds found in Examples 4.5 and 4.6 of \cite{cai2015asymptotic} correspond to
strategies of feedback form, which means that these bounds are sharp
and the asymptotically optimal strategies are explicit. 
\end{rmk}
\begin{ex}
In this example we revisit the problem of optimal tracking in the
multidimensional setting with fixed costs. A very similar problem was studied in \cite{gobet2012almost}, in the context of optimal rebalancing
of hedging portfolios, and an asymptotically optimal strategy based on
hitting times of ellipsoids has been
found. \\ 

\noindent Let $D(x) = x^T\Sigma^D x$, $l=0$, $F(\xi) \equiv 1$ and
$P(\xi) \equiv 0$. Without loss of generality we set $r=1$ and $k=1$. 
The cost functional of the
linear programming problem becomes, 
$$
I = \inf_{(\mu, \rho)}  \int_{\bbR^d }x^T\Sigma^D x\, \mu(dx) + \rho(\bbR^d\times \bbR^d),
$$
where $\mu$ is now a probability measure on $\bbR^d$ only, since
there is no regular control. 
By Lemma 3.1 in \cite{gobet2012almost}, the matrix equation
\begin{align}
2  (a^{1/2} B a^{1/2})^2     + a^{1/2} B a^{1/2}\,
\mathrm{Tr}(a^{1/2} B a^{1/2}) = 2 a^{1/2}\Sigma^D a^{1/2} \label{eqgobet}
\end{align}
admits a unique solution $B\in \mathcal S^d_+$. We define
$$
w(x) =  \left\{\begin{aligned}&x^T Bx - \frac{ (x^T B x)^2}{4}, &&x^TB
    x < 2\\
&1,&& x^T B x\geq 2.
\end{aligned}\right.
$$
Clearly, $w\in C^1(\mathbb R^{d})\cap C^2(\mathbb R^d \setminus N)$,
where $N = \{x: x^T Bx = 2\}$. Note also that 
\begin{align}
\frac{1}{2}\sum_{ij} a_{ij} \partial^2_{ij} w(x) = \left\{\begin{aligned}&\mathrm{Tr}(aB)\Big(
1-\frac{x^T B x}{2} \Big) -  x^T B a B
x , &&x^TB
    x < 2\\ &0, && x^TB
    x > 2\end{aligned}\right. \label{2der}
\end{align}
We are going to apply Lemma 8.1 of
\cite{cai2015asymptotic} to compute a lower bound for $I$. Property
2 of Lemma 8.1, is a straightforward consequence of
\eqref{2der}. Property 3 of this lemma is
also easily deduced from \eqref{2der}, with $I^V = \text{Tr}(aB)$. It
remains to check Property 1, which requires that $\mu(N) = 0$ for all $(\mu,\rho)$ with $\int x^T \Sigma^D x \, \mu(dx)<\infty$ and  
\begin{align}
\int_{\mathbb R^d_x}\frac{1}{2}\sum_{ij} a_{ij} \partial^2_{ij} f(x) \mu(dx) +
\int_{\mathbb R^d_x \times \mathbb R^d_\xi}(f(x+\xi)-f(x))\rho(dx, d\xi) = 0,\quad f\in C^2_0(\mathbb R^d).\label{constraint}
\end{align}
Assume that this is not the case.  For $\delta \in (0,1)$ let $\phi_\delta: \mathbb R^d \to
[0,1]$ be defined as follows. 
\begin{align*} 
&\phi_\delta = 0,\quad &&x^T B x < 2-2\delta\\ 
&\phi_\delta = (x^T B x - 2+2\delta)/\delta,\quad &&2-2\delta \leq x^T B x <
  2-\delta\\
&\phi_\delta = 1,\quad &&2-\delta \leq x^T B x < 2+\delta\\
&\phi_\delta =(2+2\delta - x^TBx)/\delta,\quad &&2+\delta \leq x^T B x < 2+2\delta\\
&\phi_\delta = 0,\quad &&2+2\delta \leq x^T B x .
\end{align*}
Let $f_\delta$ be the extension to $C^2_0(\mathbb R^d)$ of the solution of 
$$
\frac{1}{2}\sum_{ij} a_{ij} \partial^2_{ij} f_\delta(x)  =
\phi_\delta(x),\quad x^T B x \leq 4
$$
with boundary condition $f_\delta (x) = 0$ for $x^T B x = 4$. By
Feynmann-Kac formula,
$$
f_\delta(x) = \mathbb E \Big[\int_0^{\tau} \phi_\delta(\Xi^x_s) ds\Big],
$$
where $\Xi^x$ is defined by.
$$
\Xi^x_t = x + \int_0^t \sqrt{a} dW_s,
$$
and $\tau = \inf\{t>0: (\Xi^x_t)^T B\, \Xi^x_t=4\}$. 
By the dominated convergence theorem, $f_\delta(x) \to 0$ as $\delta
\to 0$ for all $x$ with $x^T B x \leq 4$, and also uniformly on $x$. On the other hand,
$$
\int_{\mathbb R^d_x}\frac{1}{2}\sum_{ij} a_{ij} \partial^2_{ij} f_\delta(x) \mu(dx) =
\int_{\mathbb R^d_x}\phi_\delta(x) \mu(dx)\geq \mu(N),
$$
which yields a contradiction with \eqref{constraint}. Finally, Lemma 8.1 of
\cite{cai2015asymptotic} may be applied and yields that $I\geq
\text{Tr} (aB)$.

\noindent Now consider $G:= \{x\in \mathbb R^d: x^T B x < 2\}$ and $\xi(x) =
-x$. Since there is no regular control, we let $U(x)=0$ and omit this
variable as well as the variables $l$ and $h$ below, and we also set
$r=1$ and $k=1$ as above. Then,
$$
c(a,G,\xi) = \int_{\overline G} D(x) \pi^{(a,G,\xi)} (dx) +
\nu^{(a,G,\xi)}(\partial G), 
$$
where the measures $\pi^{(a,G,\xi)} $ and $\nu^{(a,G,\xi)}$ are
uniquely defined by Lemma \ref{separ.jump}. Since the measure
$\pi^{(a,G,\xi)}$ does not charge the set $N$, we may show by an
approximation argument that 
$$
\int_{G} A^a w(x) \pi^{(a,G,\xi)}(dx) + \int_{\partial G}B_\xi w(x)
\nu^{(a,G,\xi)}(dx) = 0.
$$
Substituting the expressions of $A^a$ and $B_\xi$, and using
\eqref{2der}, we finally obtain
$$
\int_G x^T \Sigma^D x\, \pi^{(a,G,\xi)}(dx) + \nu^{(a,G,\xi)}(\partial G) = \text{Tr}(aB) .
$$
To summarize, we have established that for the cost functional of the form
$$
J^\varepsilon(\tau^\varepsilon,\xi^\varepsilon) = \int_0^T r_t
\,(X^\varepsilon_t)^T \Sigma^D  X^\varepsilon_t\, dt +
\varepsilon\sum_{j: 0< \tau^\varepsilon_j \leq T}
k_{\tau^\varepsilon_j},
$$ 
and for any admissible control strategy $(\tau^\varepsilon,\xi^\varepsilon)$, we have a lower bound
$$
\liminf_{\varepsilon \to 0}\varepsilon^{-1/2}
J^\varepsilon(\tau^\varepsilon, \xi^\varepsilon) \geq_p
\int_0^T I(a_t,  r_t, k_t) dt,
$$
where 
$$
I(a_t,r_t,k_t) = \text{Tr}(a_t B_t)\sqrt{r_t k_t},
$$
where $B_t$ is the solution of Equation \eqref{eqgobet} with
matrix $a_t$. This lower bound is sharp and may be
attained by a feedback strategy based on hitting times of the
time-varying domain $G_t = \{x\in \mathbb R^d: x^T B_t x <
2\sqrt{\frac{k_t}{r_t}}\}$. Note that unlike the asymptotically
optimal strategy found in \cite{gobet2012almost}, the
domain is not fixed between the consecutive hitting times. 


\end{ex}

\section{Extensions to other types of control}\label{sec: extensions}

\subsection{Combined regular and singular control}
In the absence of fixed cost component, we consider a family of strategies $(u^\varepsilon, \gamma^\varepsilon, \varphi^\varepsilon)$ with $u^\varepsilon$ a progressively measurable process, $\gamma^\varepsilon_t\in \Delta = \{\gamma \in \bbR^d| \sum_i |\gamma^i| = 1\}$ and $\varphi^\varepsilon_t$  non-decreasing such that
\begin{equation*}
X^\varepsilon_t = -X^\circ_t + \int_0^tu^\varepsilon_s ds + \int_0^t \gamma^\varepsilon_{s}d\varphi^\varepsilon_s. 	\end{equation*}
The associated cost functional is given by
\begin{equation*}
J^\varepsilon(u^\varepsilon, \gamma^\varepsilon, \varphi^\varepsilon) = \int_0^T(r_tD(X^\varepsilon_t) + \varepsilon^{\beta_Q}l_tQ(u^\varepsilon_t))dt + \int_0^T \varepsilon^{\beta_P}h_tP(\gamma^\varepsilon_{t})d\varphi^\varepsilon_t.	
\end{equation*}

\noindent The corresponding class of feedback strategies will involve continuous
controls of local-time type. More precisely, let $G_t$ be a moving domain with piecewise $C^2$ boundary and $U_t$ a continuous function defined on $\overline{G}_t$ as before, let $\Gamma_t$ be a  set-valued mapping defined on $\partial G_t$ with \emph{closed graph}  such that $\Gamma_t(x)$ is a non-empty closed convex cone in $\bbR^d$ with vertex at the origin $0$ for each $x\in \partial G_t$. Usually $\Gamma_t(x)$ contains only a single direction on the smooth part of $\partial G$. If $\partial G$ is $C^2$ then $\Gamma_t(x)$ can also be expressed in terms of a vector field on $\partial G$.  \emph{We assume that the triplet $(U_t, {G}_t, \Gamma_t)$ is continuous and progressively measurable.} \\

\noindent The feedback strategy $(X^\varepsilon, u^\varepsilon,
\gamma^\varepsilon, \varphi^\varepsilon)$ based on $(U_t, G_t,
\Gamma_t)$ is defined by
\begin{equation*}
X^\varepsilon_t= -X^\circ_t + \int_0^t u^\varepsilon_s ds + \int_0^t \gamma^\varepsilon_sd\varphi^\varepsilon_s,
\end{equation*}
with $\gamma^\varepsilon_t\in \Gamma_t$, $\varphi^\varepsilon$ \emph{continuous} non decreasing  and 
\begin{equation*}
u^\varepsilon_t = \varepsilon^{-(\alpha-1)\beta}U_t(\varepsilon^{-\beta}X^\varepsilon_t),
\end{equation*}
such that
\begin{enumerate}
\item
 ${\varepsilon^{-\beta}}X^\varepsilon_t \in \overline{G}_t$,
\item
$ \gamma^\varepsilon_t\in \Gamma_t(X^\varepsilon_t)\cap \Delta$, $d\varphi_t$-almost surely, 
\item 
$\int_0^T \mathbbm{1}_{G_t}({\varepsilon^{-\beta}}X^\varepsilon_t) d\varphi^\varepsilon_t = 0. 	$
\end{enumerate}

\noindent Note that the condition $\gamma_t^\varepsilon\in \Delta$
allows $\gamma^\varepsilon_t$ and $\varphi^\varepsilon_t$ to be
uniquely defined.  We assume once again existence of the strategy. 
\begin{asmp} \label{asmp: existence singular}
The feedback strategy $(X^\varepsilon, u^\varepsilon, \gamma^\varepsilon, \varphi^\varepsilon)$ exists and is unique 
for each $\varepsilon >0$.
\end{asmp}
\noindent The existence of such strategies is closely related to the Skorohod
oblique reflection problem in time-dependent domains. While it is easy
to establish in dimension one (see
\cite{slominski2013stochastic}), it is not at all trivial in higher
dimension. We refer to \cite{nystrom2010skorohod} for precise
sufficient conditions for the existence of such strategy. \\



\noindent From now on, we restrict ourselves to $(U_t, G_t, \Gamma_t)$ for which the above strategies exist and are well-behaved. 
\begin{dfn}[Admissible Strategy]
The triplet $(U_t, G_t, \Gamma_t)$ is admissible if the following
conditions hold. 
\begin{enumerate}

\item 
(Potential.) There exists $V\in C^2(\bbR^d)$ such that $\forall (t, \omega)$, 
\begin{equation}
\crochet{\nabla V(x), \gamma} <0, \quad \forall x\in \partial G^\omega_t, \gamma\in \Gamma^\omega_t(x). \label{potential.reflect}
\end{equation}
We say that $V$ is a potential function for $(G_t, \Gamma_t)$.
\item 
(Separability.) For any $(t, \omega)$, let $U=U^\omega_t, G=G^\omega_t$ and $\Gamma = \Gamma_t^\omega$, then there exists a unique couple $(\pi, \rho)\in \scrP(\overline{G})\times \scrM(\Gamma^g_\Delta)$ such that
\begin{equation}\label{eqn: char stn mes}
\int_{\overline{G}}A_U^af(x) \pi(dx) + \int_{\Gamma^g_\Delta} Bf(x, \gamma)\rho(dx, d\gamma)=0, \quad \forall f\in C^2_0(\bbR^d), 
\end{equation}
where $\Gamma^g_\Delta = \{(x, \gamma)|x\in\partial G,  \gamma \in \Delta\cap \Gamma(x) \}$ and
\begin{equation*}
A_U^{a}f(x) = \frac{1}{2}\sum_{ij}a_{ij}\partial^2_{ij}f(x) +U(x)\cdot \nabla f(x), \quad Bf(x, \gamma) = \gamma\cdot \nabla f(x).
\end{equation*} 
We note $\pi=:\pi^{(a, U, G, \Gamma)}$ and $\rho=:\rho^{(a, U, G, \Gamma)}$.
\end{enumerate}
\end{dfn}

\noindent The following lemma clarifies the meaning of the measures $\pi$ and
$\rho$ and provides an example of a situation where the
separability assumption holds. For simplicity, we consider a smooth
boundary and place ourselves in the classical setting of
Stroock and Varadhan \cite{stroock1971diffusion}. 

\begin{lem}\label{separ.reflect}
Let $G$ be a connected bounded open subset of $\mathbb R^d$. Let
$(a_{ij})$ be positive definite, $u:G \to \mathbb R^d$ be bounded and
measurable, and $\gamma: \partial G \to \mathbb R^d$ be bounded and
continuous. Assume that there exists a function $V \in C^2(\mathbb
R^d)$ such that $G =
\{x\in \mathbb R^d: V(x)<0\}$ and $\gamma(x)\cdot \nabla V(x)<0$ for
all $x\in \partial G$. Define $\Gamma(x) = \{s\gamma(x),s\geq
0\}$. 
Then there exists a unique couple $(\pi, \nu)\in
\scrP(\overline{G}) \times \scrM(\partial G)$ verifying the constraints \eqref{eqn: char stn mes}.
\end{lem}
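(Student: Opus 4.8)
The plan is to mirror the structure of the proof of Lemma \ref{separ.jump}, replacing the jump operator $B_\xi$ by the oblique-reflection operator $Bf(x,\gamma)=\gamma\cdot\nabla f(x)$ and the boundary Markov chain by the Stroock--Varadhan submartingale-problem solution of the reflected diffusion. First I would recall that, under the stated hypotheses (smooth bounded domain, $(a_{ij})$ uniformly elliptic, $u$ bounded measurable, $\gamma$ bounded continuous, and $\gamma\cdot\nabla V<0$ on $\partial G$), the submartingale problem of \cite{stroock1971diffusion} has a unique solution $\overline X$, which is a reflected diffusion with dynamics $d\overline X_t=\sqrt a\,dW_t+U(\overline X_t)\,dt+\gamma(\overline X_t)\,d\varphi_t$, where $\varphi$ is the continuous, nondecreasing boundary local time ($\int_0^t\mathbbm1_{G}(\overline X_s)\,d\varphi_s=0$). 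Then, for $t>0$, I set
\begin{equation*}
\pi_t(\calA)=\frac1t\,\bbE\Big[\int_0^t \mathbbm1_{\calA}(\overline X_s)\,ds\Big],\qquad
\rho_t(\calB)=\frac1t\,\bbE\Big[\int_0^t \mathbbm1_{\calB}\big(\overline X_s,\gamma(\overline X_s)\big)\,d\varphi_s\Big],
\end{equation*}
for Borel $\calA\subseteq\overline G$ and $\calB\subseteq\Gamma^g_\Delta$.

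For \textbf{existence}, the key is tightness of $(\rho_t)$, i.e. a bound on $\bbE[\varphi_t]$. This comes from It\^o's formula applied to the potential $V$: since $A^a_U V$ is bounded on $\overline G$ and $\gamma\cdot\nabla V\le-\delta<0$ on $\partial G$ (by continuity and compactness), one gets
\begin{equation*}
\delta\,\bbE[\varphi_t]\le \bbE[V(0)-V(\overline X_t)]+\Big|\bbE\!\int_0^t A^a_U V(\overline X_s)\,ds\Big|\le C(1+t),
\end{equation*}
using that the stochastic integral is a martingale and $V$ is bounded on $\overline G$. Hence $(\rho_t/)$ restricted to the compact set $\Gamma^g_\Delta$ has uniformly bounded mass and is tight; $(\pi_t)$ is tight as a family of probability measures on the compact $\overline G$. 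Extracting a subsequence $t_n\to\infty$ with $\pi_{t_n}\to\pi^*$, $\rho_{t_n}\to\rho^*$ weakly, and applying It\^o's formula to $f\in C^2_0(\bbR^d)$,
\begin{equation*}
\bbE[f(\overline X_{t_n})]-f(0)=t_n\!\int_{\overline G}A^a_U f\,d\pi_{t_n}+t_n\!\int_{\Gamma^g_\Delta}Bf\,d\rho_{t_n},
\end{equation*}
dividing by $t_n$ and letting $n\to\infty$ gives \eqref{eqn: char stn mes}, so $(\pi^*,\rho^*)$ is an admissible couple.

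For \textbf{uniqueness} I would again argue as in Lemma \ref{separ.jump} but using the reflected diffusion in place of the boundary chain. Given $g\in C^{2+\alpha}(\partial G)$, classical elliptic theory (\cite{ladyzhenskaya1968linear}) provides $F^g\in C^{2+\alpha}(\overline G)$ with $A^a_U F^g=0$ in $G$ and $F^g=g$ on $\partial G$, extendable to $C^2_0(\bbR^d)$; plugging $F^g$ into \eqref{eqn: char stn mes} yields $\int_{\partial G}\gamma(x)\cdot\nabla F^g(x)\,\nu(dx)=0$ where $\nu$ is the marginal of $\rho$ on $\partial G$. One recognizes $x\mapsto\gamma(x)\cdot\nabla F^g(x)$ as (a multiple of) the generator of the boundary process associated with the reflected diffusion --- equivalently, $\nu$ is, up to normalization, the invariant measure of the Markov process traced on $\partial G$ by the excursions of $\overline X$. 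Ergodicity of that boundary process follows from uniform ellipticity and the interior-cone reflection: a Harnack/Doeblin argument (as in Lemma \ref{separ.jump}, via \cite{tweedie1975sufficient}) shows the hitting distribution of $\partial G$ started from a compactly-contained neighborhood is comparable to that started from a fixed point, giving a minorization and hence uniqueness of $\nu$ up to a constant. The constant is then fixed by taking $f\in C^2_0$ with $A^a_U f=1$ on $\overline G$, which pins down the total mass of $\rho$; finally $\pi$ is recovered uniquely by solving $A^a_U f=g$ for arbitrary $g\in C^\alpha(\overline G)$ and reading off $\int_{\overline G}g\,d\pi$ from \eqref{eqn: char stn mes}.

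The \textbf{main obstacle} is the uniqueness half, specifically identifying the operator induced on $\partial G$ by substituting the $A^a_U$-harmonic extension $F^g$ --- in the jump case this was literally the transition kernel of $(\overline Y_n)$, whereas here $\gamma\cdot\nabla F^g$ is a boundary \emph{differential} operator and one must argue (via the submartingale characterization, or via approximating the local time by scaled occupation times of thin shells $\{0<V<\epsilon\}$) that $\nu$ is invariant for the corresponding boundary Markov process, and that this process inherits a Doeblin condition. The ellipticity plus the strict inward-pointing condition $\gamma\cdot\nabla V<0$ are exactly what make the excursion/boundary process irreducible and aperiodic, so the Harnack comparison goes through; smoothness of $\partial G$ is used to guarantee the solvability and $C^{2+\alpha}$ regularity of the auxiliary Dirichlet problems.
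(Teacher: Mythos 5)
Your existence argument is essentially the paper's: occupation measures $\pi_t,\rho_t$ built from the Stroock--Varadhan solution, tightness of the boundary measures obtained by applying It\^o's formula (the submartingale property) to $V$ together with $\gamma\cdot\nabla V\le-\delta<0$ on $\partial G$, and passage to the limit in the constraint. The uniqueness half is where you diverge from the paper, and where there is a genuine gap. Your plan rests on the claim that, after substituting the $A^a_U$-harmonic extension $F^g$, the identity $\int_{\partial G}\gamma\cdot\nabla F^g\,d\nu=0$ expresses invariance of $\nu$ for a boundary Markov process traced by the excursions, whose ergodicity would follow from a Harnack--Doeblin argument. Unlike Lemma \ref{separ.jump}, where the substitution literally produces the transition kernel of the discrete boundary chain $(\overline Y_n)$, here $g\mapsto\gamma\cdot\nabla F^g$ is a nonlocal (Dirichlet-to-Neumann type) operator generating an infinite-activity jump process on $\partial G$; making the invariance identification rigorous and proving uniqueness of the invariant measure of that process is precisely the hard content of the step, and you leave it as a sketch (you acknowledge it as the main obstacle). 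There is also a regularity problem: the lemma assumes only that $u$ is bounded and measurable, so Schauder theory from \cite{ladyzhenskaya1968linear} does not yield $C^{2+\alpha}(\overline G)$ solutions of $A^a_U F^g=0$ (that step was legitimate in Lemma \ref{separ.jump}, where $U$ is Lipschitz); without a $C^2$ extension you cannot even use $F^g$ as a test function in \eqref{eqn: char stn mes}.

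The paper's uniqueness proof takes a different route that avoids both issues. Given any couple $(\pi,\rho)$ satisfying \eqref{eqn: char stn mes}, Theorem 1.7 of \cite{kurtz2001stationary} produces a stationary process $X$ with one-dimensional law $\pi$ and a boundary random measure realizing $\rho$, for which the associated compensated processes are martingales; testing with $f\in C^2_0$ satisfying $\gamma\cdot\nabla f\ge 0$ on $\partial G$ shows that $X$ is a stationary solution of the submartingale problem of \cite{stroock1971diffusion}, so $\pi$ must be the invariant measure of the reflected diffusion, which is unique by strict ellipticity on the bounded domain \cite{freidlin85functional}. The boundary measure is then dispatched by an elementary trick your proposal misses: once $\pi$ is unique, the difference $\rho$ of two candidate boundary measures satisfies $\int_{\partial G}\gamma\cdot\nabla f\,d\rho=0$ for all $f\in C^2_0$, and the choice $f=\alpha V$ with $\alpha\in C^2_0$ arbitrary gives $\nabla(\alpha V)=\alpha\nabla V$ on $\partial G$ (since $V=0$ there), hence $\int_{\partial G}\alpha\,\gamma\cdot\nabla V\,d\rho=0$ and $\rho=0$ because $\gamma\cdot\nabla V<-\beta<0$. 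To salvage your route you would have to actually build the boundary/excursion process machinery or the thin-shell approximation you allude to; as written, the uniqueness step is not proved.
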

\begin{proof}
 Throughout the proof we shall assume with no loss of generality that 
$|\nabla V(x)|>1$ and $\gamma(x)\cdot
\nabla V(x)<-\beta <0$ for some $\beta>0$ for $x\in \partial G$, and
that $V$, as well as its first and second derivative are bounded. 
Also, under our assumptions, condition \eqref{eqn: char stn mes}
becomes
$$
\int_{\overline{G}}A_U^af(x) \pi(dx) + \int_{\partial G} Bf(x)\rho(dx)=0, \quad \forall f\in C^2_0(\bbR^d), 
$$
with $Bf(x) = \gamma(x)\cdot\nabla f(x)$.\\

\noindent Let $(\mathbb P, X)$ be a solution to
the submartingale problem with coefficients $a$, $U$ and $\gamma$ (see
Theorem 3.1 in \cite{stroock1971diffusion}), and let $\xi$ be the
associated non-decreasing process which increases only on the boundary
$\partial G$. For measurable subsets $\mathcal A\subset \overline G$,
$\mathcal B\subset
\partial G$, define, for all $t>0$,
$$
\pi_t(\mathcal A) = \frac{1}{t}\mathbb E\int_0^t \mathbf 1_{\mathcal A}(X_s) ds,\qquad
\rho_t(\mathcal B) = \frac{1}{t}\mathbb E\int_0^t \mathbf 1_{\mathcal B}(X_s) d\xi_s. 
$$
By Theorem 2.5 in \cite{stroock1971diffusion}, 
$$
V(X_t) - \int_0^t \mathbf 1_G(X_s) A^a_U V(X_s) ds  - \int_0^t \nabla
V\cdot \gamma d\xi_s 
$$
is a martingale. Taking the expectation yields
$$
\mathbb E[V(X_t)] = \mathbb E[V(X_0)] - t\int_{\overline G} \mathbf
1_G(x) A^a_U V(x) \pi_t(dx) - t\int_{\partial G} \nabla V(x) \cdot
\gamma(x) \rho_t(dx).
$$ 
Using the boundedness of
$U$, $V$ and the derivatives of $V$, and the condition that
$\gamma(x)\cdot \nabla V(x)<-\beta$ we conclude that 
$$
\rho_t(\partial G) \leq C
$$
for some constant $C<\infty$ which does not depend on $t$, which shows
that $(\rho_t)$ is tight. By the same argument as in the proof of Lemma
\ref{separ.jump}, using Theorems 2.4 and 2.5 in
\cite{stroock1971diffusion}, we conclude that there exists a couple $(\pi,\rho)$ satisfying \eqref{eqn: char stn mes}.\\


 \noindent Let us now prove that the couple $(\pi,\rho)$ satisfying \eqref{eqn:
   char stn mes} is unique. Choose one such couple $(\pi,\rho)$. By Theorem 1.7 in
 \cite{kurtz2001stationary}, there exists a process $X$ and a random
 measure $\Gamma$ with $\mathbb E[\Gamma(\cdot\times [0,t])] =
 t\rho(\cdot)$, such that $X$ is stationary and has distribution
 $\pi$, and for each $f\in C^2_0(\mathbb R^d)$, 
$$
f(X_t) - \int_0^t A^a_U f(X_s) ds - \int_{\overline  G\times [0,t] }
Bf(x) \Gamma(dx\times ds) 
$$
is a martingale. Moreover, from the discussion in section 2 of
\cite{kurtz2001stationary} we deduce that $\Gamma$ is a positive
measure, and hence that it is supported by $\partial G \times
[0,\infty)$.\\

\noindent Taking $f\in C^{2}_0(\mathbb R^d)$ satisfying $\gamma(x)\cdot\nabla
f(x)\geq 0$ for $x\in \partial G$, we see that $X$ is a stationary solution of the submartingale
problem (in the sense of \cite{stroock1971diffusion}), which means
that $\pi$ is necessarily an invariant measure of reflected diffusion
process in $G$. Since the generator of $X$ is strictly elliptic, and
the domain $G$ is bounded,
the invariant measure is unique\footnote{We thank Emmanuel Gobet for
  pointing out this reference.} (see \cite[pp.~97-98]{freidlin85functional}), proving the uniqueness of $\pi$. \\

\noindent It remains to show the uniqueness of $\rho$, or in other words to
prove that a signed measure $\rho$ satisfying
$$
\int_{\partial G} \gamma(x)\cdot\nabla f(x)\rho(dx)=0, \quad \forall f\in C^2_0(\bbR^d), 
$$
is necessarily null. To this end, choose $f(x) = \alpha(x) V(x)$,
with $\alpha \in C^{2}_0$. Then, $\nabla (\alpha V) = V \nabla \alpha
+ \alpha \nabla V = \alpha \nabla V$ on $\partial G$. Therefore,
$$
0 = \int_{\partial G} \alpha(x) \gamma(x)\cdot \nabla V(x)\rho(dx),
$$
which shows that $\rho(x) = 0$ since $\gamma(x) \cdot
\nabla V(x)<-\beta<0$ and $\alpha$ is arbitrary.

\end{proof}

\begin{ex}\label{ex: reflection}
Let $A_t$ be a continuous adapted process with values in $\mathcal
S^d_+$ , the set of symmetric positive definite matrices, define
$G_t = \{x\in \mathbb R^d: x^T A_t x <1\}$ and let $\gamma(x) = -x$. Then, assumptions of Lemma
\ref{separ.reflect} are satisfied with $V(x) = x^T A_t x -1$ and
\eqref{potential.reflect} holds with $V(x) = \|x\|^2$. 

%

\end{ex}


\noindent Now we are ready to state another main result. 
\begin{theo}[Asymptotic performance for combined regular and singular control]\label{theo: limit in proba singular}
Let  $(U_t, G_t, \Gamma_t)$ be an admissible triplet and let
Assumption \ref{asmp: existence singular} be satisfied. Then,
\begin{equation*}
 \frac{1}{\varepsilon^{\zeta_D \beta}} J^\varepsilon(u^\varepsilon, \gamma^\varepsilon, \varphi^\varepsilon) \to_p \int_0^T c(a_t, U_t, G_t, \Gamma_t; r_t,l_t, h_t) dt,
\end{equation*}
where $c(a, u, G, \Gamma; r, l, h)$ is given by 
\begin{align*}
c(a, u, G, \Gamma; r, l, h)& =
 \int_{\overline{G}}  (r D(x) + l Q (u(x))) \pi^{(a, u, G, \Gamma)}(dx) \\
&\qquad \qquad\qquad + \int_{\Gamma^g_\Delta}  h P(\gamma) \rho^{(a,u, G, \Gamma)}(dx\times d\gamma).
\end{align*}
Moreover, the convergence holds term by term for the cost functions $D$, $Q$ and $P$ respectively. 
\end{theo}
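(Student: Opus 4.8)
\noindent The plan is to adapt the proof of Theorem~\ref{theo: limit in proba} almost verbatim: the boundary jump chain is replaced by the boundary local time of the obliquely reflected diffusion, and the step ``ergodicity of the boundary chain via Doeblin'' becomes ``uniqueness of the invariant measure of the reflected diffusion''. \emph{Step 1 (rescaling and time change).} Set $\widetilde X^\varepsilon_t=\varepsilon^{-\beta}X^\varepsilon_t$ and $\widetilde\varphi^\varepsilon_t=\varepsilon^{-\beta}\varphi^\varepsilon_t$, so that $\widetilde X^\varepsilon_t\in\overline G_t$ by construction. On the fast time scale $s=t/\varepsilon^{2\beta}$ --- the scale on which the quadratic variation of $X^\circ$, the regular drift $U$ and the boundary local time are all of order one, whereas the finite-variation drift $b$ of $X^\circ$ is negligible --- the process $\widetilde X^\varepsilon$ solves a Skorokhod problem in $G_t$ with generator $A^a_U$ and oblique reflection along $\Gamma_t$, with coefficients varying slowly in $t$. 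Using the homogeneity of $D,Q,P$ and the relations $\beta_Q=\beta(\zeta_D+\zeta_Q)$, $\beta_P=\beta(\zeta_D+2-\zeta_P)$ together with the definition of $U^\varepsilon$, one rewrites
$$
\frac{1}{\varepsilon^{\zeta_D\beta}}J^\varepsilon(u^\varepsilon,\gamma^\varepsilon,\varphi^\varepsilon)=\int_0^T\big(r_tD(\widetilde X^\varepsilon_t)+l_tQ(U_t(\widetilde X^\varepsilon_t))\big)\,dt+\varepsilon^{2\beta}\int_0^T h_tP(\gamma^\varepsilon_t)\,d\widetilde\varphi^\varepsilon_t .
$$
Since the three summands are nonnegative it suffices to treat each one separately, which also yields the term-by-term statement.

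\noindent \emph{Step 2 (ergodic theorem for the frozen problem).} Fix data $(a,U,G,\Gamma)$ and let $\widehat X$ be the reflected diffusion with generator $A^a_U$ on $G$ and reflection directions $\Gamma$, with boundary local time $\widehat\varphi$ and reflection process $\widehat\gamma_s\in\Delta\cap\Gamma(\widehat X_s)$. The uniqueness part of the separability assumption (established, in the smooth setting, in Lemma~\ref{separ.reflect}) identifies the stationary occupation measure and the boundary-local-time measure of $\widehat X$ with the unique pair $(\pi,\rho)=(\pi^{(a,U,G,\Gamma)},\rho^{(a,U,G,\Gamma)})$ solving \eqref{eqn: char stn mes}; since $A^a_U$ is strictly elliptic and $G$ bounded, $\widehat X$ is ergodic with unique invariant probability $\pi$. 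Hence, as $S\to\infty$,
$$
\frac1S\int_0^S g(\widehat X_s)\,ds\to\int_{\overline G}g\,d\pi,\qquad \frac1S\int_0^S \psi(\widehat X_s,\widehat\gamma_s)\,d\widehat\varphi_s\to\int_{\Gamma^g_\Delta}\psi\,d\rho
$$
almost surely for continuous $g,\psi$ and uniformly in the starting point; the second limit, for the increasing additive functional $\widehat\varphi$, is the point requiring slightly more care --- it follows from $\widehat\varphi_S/S\to\rho(\Gamma^g_\Delta)$ together with the occupation-measure representation, or from the stationary process of \cite{kurtz2001stationary} already used in Lemma~\ref{separ.reflect}.

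\noindent \emph{Step 3 (localization and passage to the limit).} Partition $[0,T]$ into $N$ intervals $[t_i,t_{i+1}]$ of length $T/N$ and freeze the data at the left endpoints. On each interval the fast clock runs for $(T/N)\varepsilon^{-2\beta}\to\infty$ units, so Step 2 applied to the frozen data shows that $\int_{t_i}^{t_{i+1}}(r_tD(\widetilde X^\varepsilon_t)+l_tQ(U_t(\widetilde X^\varepsilon_t)))\,dt$ and $\varepsilon^{2\beta}\int_{t_i}^{t_{i+1}}h_tP(\gamma^\varepsilon_t)\,d\widetilde\varphi^\varepsilon_t$ converge in probability, as $\varepsilon\to0$, to $T/N$ times the respective stationary averages at time $t_i$; note that $\widetilde\varphi^\varepsilon_T$ is of order $\varepsilon^{-2\beta}$, so the singular-cost contribution is genuinely of order one. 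Summing over $i$ and letting first $\varepsilon\to0$ and then $N\to\infty$, the resulting Riemann sums converge to $\int_0^T c(a_t,U_t,G_t,\Gamma_t;r_t,l_t,h_t)\,dt$, which gives the claim; here one has to control the error of freezing the data on each interval (using the continuity of $(U_t,G_t,\Gamma_t)$ in the Hausdorff sense and of $a_t,r_t,l_t,h_t$, together with a stability estimate for the reflected diffusion and for $(\pi,\rho)$ under small perturbations of $(a,U,G,\Gamma)$, uniform over the compact range of the data on $[0,T]$) and the boundary effects at the partition points.

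\noindent \emph{Main obstacle.} The delicate ingredient is the stability estimate invoked in Step 3: the joint continuity --- with a modulus, for the quantitative bounds --- of $(a,U,G,\Gamma)\mapsto(\pi^{(a,U,G,\Gamma)},\rho^{(a,U,G,\Gamma)})$ under Hausdorff convergence of the moving domain and its reflection field, i.e. stability of the Skorokhod oblique-reflection map and of the invariant measure and boundary-local-time measure of the reflected diffusion. Relative to Theorem~\ref{theo: limit in proba}, this and the ergodic theorem for the continuous increasing functional $\widehat\varphi$ are the only genuinely new points; the rescaling of Step 1, the ergodic averaging of Step 2 and the freezing argument of Step 3 otherwise mirror the impulse-control case.
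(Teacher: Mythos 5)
Your Step 3 rests on an ingredient that you name but do not supply, and that is precisely the hard part: a quantitative stability estimate for the obliquely reflected diffusion and for the pair $(\pi^{(a,U,G,\Gamma)},\rho^{(a,U,G,\Gamma)})$ under perturbations of $(a,U,G,\Gamma)$, uniform over the (random, merely continuous, possibly non-smooth) data appearing on $[0,T]$. Freezing the data at the left endpoints of a partition and comparing with the true time-dependent Skorokhod problem requires exactly such a continuity-with-modulus statement for the oblique reflection map in time-dependent domains; this is a genuinely delicate problem (cf.\ the discussion around Assumption \ref{asmp: existence singular} and \cite{nystrom2010skorohod}) and nothing in the admissibility definition gives it to you. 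A second gap is in Step 2: the separability condition \eqref{eqn: char stn mes} only gives uniqueness of the pair $(\pi,\rho)$ solving the adjoint equation; it does not by itself yield an almost sure ergodic theorem, uniform in the starting point, for the occupation measure and especially for the boundary local-time functional $\widehat\varphi$. The Kurtz--Stockbridge construction invoked in Lemma \ref{separ.reflect} produces a stationary solution, not pathwise ergodic averages from arbitrary initial conditions, so this step also needs an argument (Harris-type recurrence for the reflected diffusion) that you do not give.

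The paper's proof is arranged exactly so as to avoid both of these points, and it is essentially a one-line modification of the proof of Theorem \ref{theo: limit in proba}: one rescales locally on windows $(t,t+\varepsilon^{2\beta}T^\varepsilon]$, forms the empirical occupation measures of the state and of the reflection term, and proves (i) tightness from the potential function $V$ via It\^o's formula, replacing the jump-count bound \eqref{eqn: uniform integrable} by the bound $\sup_\varepsilon\mathbb E[(\wt\varphi^\varepsilon_{T^\varepsilon}/T^\varepsilon)^k]<\infty$, $k=1,2$, using $\langle\nabla V,\gamma\rangle<-\delta$ on the boundary; (ii) identification of any stable limit point as a solution of the constraint equation supported on the graphs of $(U_t,G_t,\Gamma_t)$ (Lemma 6.3 of \cite{cai2015asymptotic} plus continuity of the data), so that the separability assumption pins down the limit without any ergodic theorem; and (iii) passage from fixed-$t$ convergence in probability to the time integral via the weak-tightness result of Appendix \ref{sec: conv of integrals} (the $k=2$ bound giving uniform integrability), instead of Riemann sums and a stability estimate. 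So your route is conceptually coherent but incomplete where it matters; to repair it you would either have to prove the stability and uniform ergodicity statements, or switch to the occupation-measure/stable-limit argument, in which case the only new computation relative to the impulse case is the It\^o bound on the rescaled local time.
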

%


\begin{rmk}
In \cite{cai2015asymptotic} we have established a lower bound of
$J^\varepsilon$ of the form \eqref{lowerbound} with function $I= I(a, r,l, h) $ given by
\begin{equation}\label{eqn: lower bound}
I = \inf_{(\mu, \rho)} \int_{\bbR^d \times \bbR^d}(r D(x) + l Q(u)
)\mu(dx \times du) + \int_{\bbR^d\times \Delta \times \mathbb
  R^+_\delta}h P(\gamma)\rho(dx\times d\gamma\times d\delta),
\end{equation}
with $(\mu, \rho)\in\scrP(\bbR^d\times \bbR^d)\times
\scrM(\bbR^d\times \Delta \times \mathbb R^+_\delta )$ verifying the following constraints
\begin{equation*}\label{LP: constraint}
\int_{\bbR^d\times \bbR^d}A^af(x, u)\mu(dx\times du) +
\int_{\bbR^d\times \Delta\times \bbR^+_\delta}Bf(x,
\gamma,\delta)\rho(dx\times d\gamma\times d\delta) = 0, \quad \forall f\in C^2_0(\bbR^d),
\end{equation*}
where 
$$
Bf(x,\gamma,\delta) = \left\{\begin{aligned}&\langle \gamma,\nabla
    f(x)\rangle,&&\delta= 0,\\ &\delta^{-1}(f(x+\delta\gamma) - f(x)),&&\delta>0.\end{aligned}\right.
$$
Once again, in the one-dimensional
case, when $D(x) = x^2$, $Q(u) = u^2$ and $P(\xi) =
|\xi|$, the lower bounds found in Examples 4.4 and 4.7 of \cite{cai2015asymptotic} correspond to
strategies of feedback form, which means that these bounds are sharp
and the asymptotically optimal strategies are explicit. 
\end{rmk}

\subsection{The case when only one control is present}
The situations with singular or impulse control only are included in
the previous results. Only the case with regular control needs to be
treated separately since the domain in which the process $X$ evolves
is now unbounded. \\

\noindent Let $U_t: \bbR^d \to \bbR^d $ be a continuous predictable random
function. The feedback strategy $(X^\varepsilon_t, u^\varepsilon_t)$ based on the Markov control policy $U_t$ is given by
\begin{equation*}
X^\varepsilon_t =- X^\circ_t +u^\varepsilon_tdt, \quad u^\varepsilon_t = \varepsilon^{-\beta} U_t(\varepsilon^{-\beta}X^\varepsilon_t).
\end{equation*}
We assume that $U_t$, satisfies suitable conditions so that the
process $(X^\varepsilon,u^\varepsilon)$ exists and is unique for every
$\varepsilon>0$.\\


\noindent To study the asymptotic behavior of the controlled process $X^\varepsilon$, we consider the following class of admissible strategies. 

\begin{dfn}[Admissible strategy]
The feedback control $(U_t)$ is admissible if the following conditions
are satisfied. 
\begin{enumerate}
 \item
(Potential.) For each $(t, \omega)$, $U_t^\omega:\bbR\to \bbR$ is locally bounded and there exists a non-negative, inf-compact Lyapunov function $V\in C^2(\bbR^d)$ such that \begin{equation*}
A^{a_t}_{U_t}V(x) \leq \theta_t - 2 \Theta_t V(x), \quad \forall x\in \bbR_x,
\end{equation*}
with $\theta_t$, $\Theta_t$ positive processes. Moreover,  $rD+ l Q\circ U_t^\omega$ are dominated by $V$ near infinity, i.e. there exist locally bounded positive processes $R_t$ and $b_t$  such that
\begin{equation*}
rD(x) + lQ\circ u^\omega_t (x) \leq b^\omega_t V(x), \quad x\in \bbR^d\setminus B(x, R_t^\omega).
\end{equation*}
 \item
 (Separability.) For each $ (t, \omega)$, there exists a unique $\pi\in\scrP(\bbR^d)$ such that, 
 \begin{equation*}
\int_{\bbR^d}A^{a}_Uf(x)\pi(dx) = 0, \quad \forall f\in C_0^2(\bbR^d). 
\end{equation*}
We will denote $\pi = \pi^{(a, U)}$. 
\end{enumerate}
\end{dfn}
\begin{rmk}
Here  $\pi$ is the unique invariant measure of $dX_t = \sqrt{a} dW_t + U(X_t) dt$ under the feedback control $U$. 
\end{rmk}

\begin{ex}\label{ex: stoch}
Assume that $D$ and $Q$ are quadratic. Let $U_t(x)=  -\Sigma_t x$ with $\Sigma_t$ a continuous  process with values in $\calS_+^d$. Then $(U_t)$ is an admissible Markov policy.  To see this, it suffices to take $V(x) = x^T x$.  The separability of $\pi$ is equivalent to the ergodicity of $dX_t = \sqrt{a} dW_t -\Sigma X_t dt$.\\

\noindent In dimension one, let $D(x) = x^2$, $Q(u) = u^2$, then we have
\begin{equation*}
\pi(dx)= \frac{1}{\sqrt{2\pi \sigma }}e^{-\frac{x^2}{2 \sigma^2}} dx, \quad \sigma^2 = \frac{a}{2\Sigma}. 
\end{equation*}
\end{ex}

\noindent Below is our third main result. 

\begin{theo}[Asymptotic performance for regular control]\label{theo: limit in proba stoch}
Let $(u^\varepsilon_t)$ be the feedback strategy based on the admissible Markov control policy $U_t$, then,
\begin{equation*}
\lim_{\varepsilon\to 0} \frac{1}{\varepsilon^{\zeta_D \beta}} J^\varepsilon \to_p \int_0^T c(a_t,  U_t ; r_t, l_t) dt,
\end{equation*}
where $c(a, u  ; r,l)$ is given by 
\begin{equation*}
c(a, u; r, l) =
  \int_{\bbR^d} (r D(x) + l Q\circ u(x)) \pi(dx), 
\end{equation*}
with $\pi=\pi^{(a, u)}$.
\end{theo}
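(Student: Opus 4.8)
The plan is to follow the same ergodic-averaging strategy used in the proofs of Theorems~\ref{theo: limit in proba} and \ref{theo: limit in proba singular}, but now in the technically simpler setting of an unbounded state space and no jumps or reflections, where the only subtlety is tightness/uniform integrability at infinity, which is precisely what the Lyapunov condition in the definition of admissibility is designed to handle. First I would rescale: set $\wt X^\varepsilon_t = \varepsilon^{-\beta} X^\varepsilon_t$, so that $d\wt X^\varepsilon_t = -\varepsilon^{-\beta} dX^\circ_t + \varepsilon^{-\beta} U_t(\wt X^\varepsilon_t)\,dt$. Changing the clock to the fast time scale $s = t/\varepsilon^{2\beta}$ (equivalently, exploiting that over a macroscopic interval $[t,t+dt]$ the process $\wt X^\varepsilon$ runs for $\varepsilon^{-2\beta}\,dt$ units of its natural time), one sees that on each small interval $\wt X^\varepsilon$ behaves like a time-homogeneous diffusion with generator $A^{a_t}_{U_t}$, frozen at the current values of $a_t$, $U_t$, $r_t$, $l_t$. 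The rescaled cost is $\varepsilon^{-\zeta_D\beta} J^\varepsilon = \int_0^T \big( r_t D(\wt X^\varepsilon_t) + l_t Q\circ U_t(\wt X^\varepsilon_t)\big)\,dt$ by the homogeneity relations $\zeta_D\beta = \beta_Q + \zeta_Q\beta$ on the regular-control term, so the claim reduces to showing that the occupation measure of $\wt X^\varepsilon$ over $[t,t+h]$ converges, as $\varepsilon\to0$, to $\pi^{(a_t,U_t)}$, uniformly enough in $t$ to integrate.

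The key steps, in order: (i) Freeze the coefficients on a fine deterministic partition $0=t_0<\dots<t_N=T$ of $[0,T]$; using continuity of $(a_t, U_t, r_t, l_t)$ and a standard comparison/coupling estimate, replace $\wt X^\varepsilon$ on each $[t_i,t_{i+1}]$ by the diffusion with the frozen generator $A^{a_{t_i}}_{U_{t_i}}$, incurring an error that vanishes as the mesh $\to 0$. (ii) For the frozen diffusion, invoke the ergodic theorem: since $\pi^{(a_{t_i},U_{t_i})}$ is the unique invariant measure by the separability assumption, and the Lyapunov inequality $A^a_U V \le \theta - 2\Theta V$ with inf-compact $V$ gives a drift condition ensuring positive recurrence and exponential ergodicity (Has'minskii-type), the time-average $\frac{1}{\varepsilon^{-2\beta}h}\int \phi(\wt X^\varepsilon_s)\,ds \to \int \phi\,d\pi^{(a_{t_i},U_{t_i})}$ in probability for bounded continuous $\phi$. (iii) Upgrade from bounded $\phi$ to $\phi = r_{t_i}D + l_{t_i} Q\circ U_{t_i}$: here the domination hypothesis $rD + lQ\circ U_t \le b_t V$ outside a ball, together with the moment bound $\sup_s \E{V(\wt X^\varepsilon_s)} < \infty$ derived by applying It\^o to $e^{2\Theta s}V(\wt X^\varepsilon_s)$ and using the Lyapunov inequality, gives the uniform integrability needed to pass to the limit on the unbounded cost. (iv) Sum over $i$ and let the mesh $\to 0$, invoking dominated convergence on $[0,T]$ (the limiting integrand $c(a_t,U_t;r_t,l_t)$ is continuous in $t$ and the pre-limit quantities are bounded by an integrable dominating process), and finally note that the two cost components $rD$ and $lQ\circ U_t$ are treated by the same argument, yielding term-by-term convergence.

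The main obstacle is step (iii), the control of the cost at infinity: unlike the bounded-domain case, $\wt X^\varepsilon$ can make large excursions, and $D$, $Q\circ U_t$ are unbounded (indeed polynomially growing), so one must convert the Lyapunov structure into a genuine uniform-in-$\varepsilon$ moment bound $\sup_{\varepsilon, s\le \varepsilon^{-2\beta}T}\E{V(\wt X^\varepsilon_s)}\le C$ and then a uniform-integrability statement for $\{D(\wt X^\varepsilon_s), Q\circ U_t(\wt X^\varepsilon_s)\}$ along the occupation measure. The clean way is: apply It\^o's formula to $e^{2\Theta_{t_i} s}V(\wt X^\varepsilon_s)$ over the frozen interval, take expectations (the martingale part has zero mean after localization, justified by the inf-compactness of $V$), use $A^{a_{t_i}}_{U_{t_i}}V \le \theta_{t_i} - 2\Theta_{t_i}V$ to get $\E{V(\wt X^\varepsilon_s)} \le e^{-2\Theta_{t_i}s}V(\wt X^\varepsilon_0) + \theta_{t_i}/(2\Theta_{t_i})$, hence a bound independent of $\varepsilon$ and of $s$; then the frozen invariant measure also satisfies $\int V\,d\pi^{(a_{t_i},U_{t_i})} \le \theta_{t_i}/(2\Theta_{t_i})<\infty$, and a standard truncation argument ($\phi = \phi\wedge M + (\phi - M)^+$, the tail controlled by $b_{t_i}\int_{\{V>M\}} V\,d(\text{occupation measure})\to 0$ as $M\to\infty$ uniformly) closes the gap between bounded and unbounded test functions. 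Everything else — freezing, the ergodic theorem, the final dominated-convergence assembly — is routine once this uniform moment bound is in hand.
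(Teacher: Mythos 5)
Your step (iii) --- the Lyapunov/moment control at infinity --- is the right mechanism and essentially coincides with the paper's argument (which invokes Lemma 2.5.5 of Arapostathis et al.\ to get $\bbE[V(\wt X^{\varepsilon,t}_s)] \le \theta/(2\Theta)+V(0)$ and then dominates the running cost by $V$). The genuine gap is in your steps (i)--(ii): freezing the coefficients on a \emph{fixed} macroscopic partition and then applying an ergodic theorem to the frozen diffusion. On an interval $[t_i,t_{i+1}]$ of fixed mesh, the rescaled process runs for a time of order $\varepsilon^{-2\beta}(t_{i+1}-t_i)\to\infty$, so a ``standard comparison/coupling estimate'' (Gronwall-type) between $\wt X^\varepsilon$ and the diffusion with generator $A^{a_{t_i}}_{U_{t_i}}$ degrades exponentially in the rescaled time and does not produce an error vanishing with the mesh; what you would actually need is a perturbation bound for occupation measures that is uniform over an unbounded time horizon (quantitative ergodicity in a weighted norm compatible with the unbounded cost, stable under time-dependent \emph{random} perturbations of the coefficients). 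Nothing of this kind is implied by the admissibility definition: separability only asserts uniqueness of the measure $\pi$ solving $\int A^{a}_{U}f\,d\pi=0$ for all $f\in C^2_0$, not well-posedness or exponential ergodicity of the frozen SDE ($U_t$ is merely locally bounded), and $a_t,U_t,r_t,l_t$ are adapted random processes, so the ``frozen'' dynamics must be handled conditionally on $\calF_{t_i}$ while the true coefficients keep moving inside the interval. As written, this approximation step is precisely the hard part and would not go through without substantially stronger hypotheses.

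The paper's proof is built to avoid exactly this: it localizes on windows $(t,t+\varepsilon^{2\beta}T^\varepsilon]$ with $T^\varepsilon\to\infty$ but $\varepsilon^{2\beta}T^\varepsilon\to0$, so the coefficients are asymptotically constant over each window by continuity alone, with no rate or coupling needed; it forms the empirical occupation measures $\mu^\varepsilon_t$, gets tightness and uniform integrability from the Lyapunov bound (your step (iii)), identifies every limit point as a solution of the stationarity constraint via a stable-convergence result (Lemma 6.3 of the companion paper) rather than via an ergodic theorem, and then invokes separability (uniqueness) to conclude that the limit is $\pi^{(a_t,U_t)}$. Finally, the passage from pointwise-in-$t$ convergence of $c_t(\mu^\varepsilon_t)$ to convergence of the time integral is handled by the Cremers--Kadelka-type theorem of the Appendix (weak tightness with respect to the cost), not by plain dominated convergence, which would itself require a joint uniform-integrability justification that your sketch leaves implicit.
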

\begin{rmk}
In this case as well, we have a lower bound of the form
\eqref{lowerbound} with the function $I$ given by 
$$
I(a,r,l) = \inf_\mu \int_{\mathbb R^d_x \times \mathbb R^d_u} (rD(x) +
lQ(u))\mu(dx\times du)
$$
with $\mu \in \mathcal P(\mathbb R^d_x \times \mathbb R^d_u)$
satisfying the constraint
$$
\int_{\mathbb R^d_x \times \mathbb R^d_u} A^a f(x,u) \mu(dx\times du)
= 0\quad \forall f\in C^2_0(\mathbb R^d_x). 
$$
This shows that the bound given in Example 4.3 of
\cite{cai2015asymptotic} for the one-dimensional case with $D(x) =
x^2$ and 
$Q(u) = u^2$ is sharp.  Moreover, it can be generalized to the
multidimensional case as follows. \\

\noindent Let $D(x) = x^T D x$ and $Q(u) = u^T Q u$,
with $D, Q \in \mathcal S^+_d$, and consider the
Hamilton-Jacobi-Bellman equation
$$
\inf_u \left\{\frac{1}{2} \sum_{i,j=1}^d a_{ij} \frac{\partial^2
    w}{\partial x_i \partial x_j} + u^T \nabla w + l Q(u) + r D(x) -
  I^V\right\}=0,
$$
where $I^V$ is a constant which must be found as part of the
solution. It is easy to check that this equation admits an explicit
solution 
$$
w(x) = x^T G x \quad \text{with}\quad G = \sqrt{rl QD},\quad I^V = \frac{1}{2}\text{Tr}(aG),
$$
which corresponds to the feedback control 
$$
u(x) = -\sqrt{\frac{r}{l}Q^{-1}D} x.
$$
We then deduce that for any $\mu \in \mathcal P(\mathbb R^d_x \times
\mathbb R^d_u)$, 
\begin{align}
\int_{\mathbb R^d_x \times \mathbb R^d_u} (rD(x) +
lQ(u))\mu(dx\times du) \geq I^V - \int_{\mathbb R^d_x \times \mathbb
  R^d_u} A^a w(x,u) \mu(dx\times du). \label{estimate}
\end{align}
Since it is enough to consider measures $\mu$ for which 
$$
\int_{\mathbb R^d_x \times \mathbb R^d_u} (rD(x) +
lQ(u))\mu(dx\times du) < \infty,
$$
we can approximate the function $w$ with a sequence of functions in
$C^2_0(\mathbb R^d_x)$ and show that the integrals in the right-hand
side of \eqref{estimate} converge, proving that 
$$
\int_{\mathbb R^d_x \times \mathbb R^d_u} (rD(x) +
lQ(u))\mu(dx\times du) \geq I^V. 
$$
Now it remains to check that using the feedback strategy of
example \ref{ex: stoch} with $\Sigma = \sqrt{\frac{r}{l}Q^{-1}D}$
we have exactly $c(a,U;r,l) = I^V$, which shows that this strategy is
asymptotically optimal and the lower bound is sharp. 
\end{rmk}

\section{Proofs}\label{sec: proofs}

\paragraph{Proof of Theorem \ref{theo: limit in proba}}
Let $\varepsilon\mapsto T^\varepsilon$ be a positive decreasing
function such that $T^\varepsilon \to +\infty$ and $\varepsilon^2
T^\varepsilon \to 0$ as $\varepsilon\to 0$, and consider following the
rescaling of $X^\varepsilon$ over the horizon $(t, t+\varepsilon^2 T^\varepsilon]$:
\begin{equation*}
\wt{X}^{\varepsilon, t}_s = \frac{1}{\varepsilon^{\beta}}X^\varepsilon_{t+ \varepsilon^{2\beta}s}, \quad s\in (0, T^\varepsilon].
\end{equation*}
The dynamics of $\wt{X}^{\varepsilon, t}$ is given by 
\begin{equation*}\label{pb: local dynamics integral}
\wt{X}^{\varepsilon, t}_s = \wt{X}^{\varepsilon, t}_0 - \int_0^s \wt{b}^{\varepsilon, t}_\nu d\nu - \int_0^s\sqrt{\wt{a}^{\varepsilon, t}_\nu}d\wt{W}^{\varepsilon, t}_\nu + \int_0^s\wt{u}^{\varepsilon, t}_\nu d\nu+ \sum_{0< \wt{\tau}^{\varepsilon, t}_j \leq s}\wt{\xi}^{\varepsilon}_j,
\end{equation*}
with $(\wt{W}^{\varepsilon, t}_s)$  Brownian motion w.r.t. $\wt{\calF}^{\varepsilon, t}_s = \calF_{t + \varepsilon^{2\beta}s}$, 
$$
\wt{b}^{\varepsilon, t}_s = \varepsilon^{\beta} b_{t+ \varepsilon^{2\beta}s},\quad \wt{a}^{\varepsilon, t}_s = a_{t + \varepsilon^{2\beta}s},$$
and 
\begin{equation*} \wt{u}^{\varepsilon, t}_s = \varepsilon^{\beta} u^\varepsilon_{t+ \varepsilon^{2 \beta}s},  \quad \wt{\xi}^{\varepsilon}_j= \frac{1}{\varepsilon^{\beta}}\xi^\varepsilon_j, \quad \wt{\tau}^{\varepsilon, t}_j = \frac{1}{\varepsilon^{2 \beta}}(\tau^\varepsilon_j-t)\vee 0.
\end{equation*}
The corresponding local cost is defined by 
\begin{equation*}
{I}^\varepsilon_t =\frac{1}{T^\varepsilon}\Big(
\int_0^{T^\varepsilon}(r_{t} D(\wt{X}^{\varepsilon,t}_s) +l_t Q(\wt{u}^{\varepsilon, t}_s))ds+ \sum_{0<\wt{\tau}^{\varepsilon,t}_j\leq T^\varepsilon }(k_{t}F(\wt{\xi}^\varepsilon_j)+  h_{t}P(\wt{\xi}^\varepsilon_j))\Big).
\end{equation*}
Thanks to the following lemma, proven in \cite[Section
6]{cai2015asymptotic} , to characterize the asymptotic behavior of the cost functional, it is
enough to study the local cost.
\begin{lem} We have
\begin{equation*}
\lim_{\varepsilon\to 0}\frac{1}{\varepsilon^{\zeta_D \beta}}  J^\varepsilon  =
\lim_{\varepsilon\to 0}\int_0^TI^\varepsilon_t dt, \quad  a.s.,
\end{equation*}
if the term on the right hand side exists.
\end{lem}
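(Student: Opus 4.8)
The plan is to prove that the pathwise difference $\int_0^T I^\varepsilon_t\,dt-\varepsilon^{-\zeta_D\beta}J^\varepsilon$ tends to $0$ almost surely, which immediately yields the claimed equality of limits. The first step is to undo the time--rescaling hidden in $I^\varepsilon_t$. Substituting $\wt X^{\varepsilon,t}_s=\varepsilon^{-\beta}X^\varepsilon_{t+\varepsilon^{2\beta}s}$, $\wt u^{\varepsilon,t}_s=\varepsilon^{\beta}u^\varepsilon_{t+\varepsilon^{2\beta}s}$, $\wt\xi^\varepsilon_j=\varepsilon^{-\beta}\xi^\varepsilon_j$ and $\wt\tau^{\varepsilon,t}_j=\varepsilon^{-2\beta}(\tau^\varepsilon_j-t)\vee 0$ into the definition of $I^\varepsilon_t$, then using the homogeneity of $D,Q,F,P$, the scaling identities $\beta_Q=(\zeta_D+\zeta_Q)\beta$, $\beta_F=(\zeta_D+2)\beta$, $\beta_P=(\zeta_D+1)\beta$ and the change of variable $\nu=t+\varepsilon^{2\beta}s$, one obtains the \emph{exact} identity, with $\delta:=\varepsilon^{2\beta}T^\varepsilon$,
\begin{equation*}
T^\varepsilon\varepsilon^{(\zeta_D+2)\beta}I^\varepsilon_t=\int_t^{t+\delta}\!\!\big(r_tD(X^\varepsilon_\nu)+\varepsilon^{\beta_Q}l_tQ(u^\varepsilon_\nu)\big)d\nu+\!\!\sum_{t<\tau^\varepsilon_j\le t+\delta}\!\!\big(\varepsilon^{\beta_F}k_tF(\xi^\varepsilon_j)+\varepsilon^{\beta_P}h_tP(\xi^\varepsilon_j)\big),
\end{equation*}
i.e. $T^\varepsilon\varepsilon^{(\zeta_D+2)\beta}I^\varepsilon_t$ is precisely the cost accumulated by the strategy over the window $[t,t+\delta]$ with the random weights frozen at time $t$.

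Next I would integrate this identity over $t\in[0,T]$ and apply Fubini's theorem: each instant $\nu$ and each jump $\tau^\varepsilon_j$ is seen for $t$ ranging over an interval of length $\delta$ (suitably truncated near the endpoints), which gives
\begin{equation*}
\int_0^T I^\varepsilon_t\,dt=\varepsilon^{-\zeta_D\beta}J^\varepsilon+R^\varepsilon_{\mathrm{fr}}+R^\varepsilon_{\mathrm{bd}},
\end{equation*}
where $R^\varepsilon_{\mathrm{fr}}$ is the error from replacing $r_t,l_t,k_t,h_t$ by $r_\nu,l_\nu,k_{\tau_j},h_{\tau_j}$ and $R^\varepsilon_{\mathrm{bd}}$ gathers the contribution of the truncated windows near $0$ and $T$, including the part of the windows extending past $T$. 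To control $R^\varepsilon_{\mathrm{fr}}$ one bounds the oscillation of the weights over $[t,t+\delta]$ by the modulus of continuity $\omega(\delta)$ of the (a.s.\ continuous, locally bounded) weight processes on a compact interval and multiplies by the normalized cost contributions, which are bounded in $\varepsilon$ by the admissibility of the strategy — indeed $X^\varepsilon$ stays in $\varepsilon^\beta\overline G_t$, so $D(X^\varepsilon_\nu)\le\varepsilon^{\zeta_D\beta}\sup_{\overline G}D$, and $U$ is bounded on $\overline G_t$; this gives $|R^\varepsilon_{\mathrm{fr}}|\le C\,\omega(\delta)\to 0$. To control $R^\varepsilon_{\mathrm{bd}}$ one uses the same confinement plus the bound that the number of jumps of the strategy in any real-time window of length $\delta$ is $O(\delta/\varepsilon^{2\beta})=O(T^\varepsilon)$ almost surely: this follows from the potential condition \eqref{potential} and the continuity of $V$, since after each jump $X^\varepsilon$ re-enters $\varepsilon^\beta G_t$ at Euclidean distance bounded below from the boundary and then runs a diffusion with uniformly elliptic, bounded coefficients whose exit time from $\varepsilon^\beta G_t$ is, in rescaled time, bounded below in expectation uniformly in the window. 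Plugging this in, one checks $|R^\varepsilon_{\mathrm{bd}}|=O(\varepsilon^{2\beta}T^\varepsilon)\to 0$ by the choice of $T^\varepsilon$.

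The main obstacle is exactly this last ingredient — the almost sure bound, uniform in $\varepsilon$ and in the window location, on the number of jumps inside a window of length $\delta$, together with the underlying uniform exit-time estimate for the rescaled diffusion with jumps from the boundary. Once it is available (together with the path-regularity of the weight processes assumed in \cite{cai2015asymptotic}), both error terms are genuinely of the form (something tending to $0$)$\,\times\,$(something bounded), so $\int_0^T I^\varepsilon_t\,dt-\varepsilon^{-\zeta_D\beta}J^\varepsilon\to 0$ almost surely; in particular, if the limit of the right-hand side exists a.s., so does that of the left-hand side, with the same value. Since the whole decomposition is additive in the four cost components $D,Q,F,P$, the same argument also yields the term-by-term version used in the proof of Theorem \ref{theo: limit in proba}.
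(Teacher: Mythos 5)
Your reduction is the right one, and the first half is correct: the exact identity $T^\varepsilon\varepsilon^{(\zeta_D+2)\beta}I^\varepsilon_t=\int_t^{t+\delta}\bigl(r_tD(X^\varepsilon_\nu)+\varepsilon^{\beta_Q}l_tQ(u^\varepsilon_\nu)\bigr)d\nu+\sum_{t<\tau^\varepsilon_j\le t+\delta}\bigl(\varepsilon^{\beta_F}k_tF(\xi^\varepsilon_j)+\varepsilon^{\beta_P}h_tP(\xi^\varepsilon_j)\bigr)$ follows from the homogeneity exponents exactly as you compute, and the Fubini decomposition into a frozen-weight error and a boundary-window error is the natural way to compare $\int_0^T I^\varepsilon_t\,dt$ with $\varepsilon^{-\zeta_D\beta}J^\varepsilon$. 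Note, though, that this paper does not prove the lemma at all: it is quoted from Section 6 of \cite{cai2015asymptotic}, where it is established for \emph{arbitrary} admissible strategies, whereas your argument leans on the feedback structure (confinement in $\varepsilon^\beta\overline G_t$, boundedness of $U$), so at best you prove the special case needed here.

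The genuine gap is the ingredient you yourself flag as the main obstacle: the claim that the number of jumps in any window of length $\delta=\varepsilon^{2\beta}T^\varepsilon$ is $O(T^\varepsilon)$ \emph{almost surely, uniformly in $\varepsilon$ and in the window location}, deduced from an exit-time estimate that is only ``bounded below in expectation''. An expectation lower bound on exit times cannot yield a pathwise upper bound on jump counts, and such a deterministic uniform bound is simply false: on any given window the diffusion hits the boundary an arbitrarily large number of times with positive probability, and you are asking for uniformity over order $T/\delta$ windows. What the potential condition \eqref{potential} actually delivers, via It\^o's formula as in Lemma \ref{existence} and in the bound \eqref{eqn: uniform integrable} used in the proof of Theorem \ref{theo: limit in proba}, is control of the jump count by a constant plus a stochastic integral, i.e.\ moment or in-probability bounds, not a.s.\ uniform ones; with that input your two error terms go to zero in probability, which is weaker than the a.s.\ statement of the lemma. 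To recover the almost-sure claim you should instead exploit the hypothesis that the right-hand limit exists: after the localization making $r,k$ bounded below and since $F\ge\min_iF_i>0$, pathwise boundedness of $\int_0^TI^\varepsilon_t\,dt$ already controls $\varepsilon^{2\beta}\delta^{-1}\int_0^TN^\varepsilon\bigl((t,t+\delta]\bigr)dt$ pathwise, and this time-averaged windowed jump intensity is exactly what is needed to bound both the frozen-weight error (against the modulus of continuity of the weights) and the boundary windows near $0$ and $T$ — no unconditional jump-count bound is required. As written, the error analysis does not close, so the proof is incomplete at precisely the step on which the conclusion rests.
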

\noindent  Moreover, up to a localization procedure, we can assume that $U_t^g=\{(x, U_t(x)), x\in G_t\}$ is contained in a bounded ball of $\bbR^d\times \bbR^d$, and there exist positive constants $\delta$ and $M$ such that
$
\delta< V(\xi_t(x))- V(x) $, $ x\in \partial G_t$,
$
V(x) < M$, $ x\in G_t$,
and moreover 
$$
\|a_t(\omega)\|\vee r_t(\omega)^{\pm 1}\vee l_t(\omega)^{\pm 1}\vee h_t(\omega)^{\pm 1}\vee k_t(\omega)^{\pm 1}<M
$$
for any $(t, \omega)$. Finally, since we are interested in convergence
in probability we may assume that $X^\circ$ is a martingale, that is,
$b_t\equiv 0$. \\

\noindent Define random measures $(\mu^\varepsilon_t, \rho^\varepsilon_t)$ by
\begin{align}
\mu^\varepsilon_t&=\frac{1}{T^\varepsilon}\int_0^{T^\varepsilon}\delta_{\{(\wt{X}^{\varepsilon, t}_s,\wt{u}^{\varepsilon,t}_s)\}}ds \in \scrP(\bbR^d\times \bbR^d), \nonumber \\
\rho^\varepsilon_t& = \frac{1}{T^\varepsilon} \sum_{0 < \wt{\tau}^{\varepsilon, t}_j \leq T^\varepsilon} \delta_{\{(\wt{X}^{\varepsilon, t}_{\wt{\tau}^{\varepsilon, t}_j-}, \wt{\xi}^{\varepsilon}_j)\}} \in \scrM(\bbR^d\times\bbR^d) ),\nonumber
\end{align}
and   $c: \Omega \times [0, T] \times \scrP(\bbR^d\times \bbR^d)\times \scrM({\bbR^d\times \bbR^d})\to \bbR$   by
\begin{align*}
c_t(\omega,\mu, {\rho}) :=& \int_{\bbR^d\times \bbR^d}(r_t(\omega) D(x)
+ l_t(\omega) Q(u)) \mu(dx\times du) \\&\qquad+ \int_{{\bbR^d\times \bbR^d}} (k_t(\omega) F(\xi) + h_t(\omega) P(\xi)){\rho}(dx\times d\xi).
\end{align*}
Then $(\mu^\varepsilon, \rho^\varepsilon)$ is a sequence of stochastic processes with values in $\scrP(\bbR^d\times \bbR^d)\times \scrM({\bbR^d\times \bbR^d})$ and
\begin{equation*}
\int_0^TI^\varepsilon_t dt= \int_0^T c_t(\mu^\varepsilon_t, \rho^\varepsilon_t) dt.
\end{equation*}
Let $(\pi_t, \nu_t):= (\pi^{(a_t, U_t, G_t, \xi_t)}, \nu^{(a_t, U_t, G_t, \xi_t)})$ be the process uniquely determined by \eqref{eqn: separability}, and put
\begin{equation*}
\mu_t(dx\times du) = \pi_t(dx)\otimes \delta_{U(x)} (du), \quad \rho_t(dx\times d\xi) = \nu_t(dx)\otimes \delta_{\xi(x)}(d\xi).
\end{equation*}
Then we have to show
\begin{equation}\label{eqn: global convergence}
\int_0^T c_t(\mu^\varepsilon_t, \rho^\varepsilon_t) dt \to_p \int_0^T c_t(\mu_t, \rho_t) dt.
\end{equation}
In view of Appendix \ref{sec: conv of integrals}, it suffices to prove the following lemma. 

\begin{lem}\label{lem: local to global}
\begin{enumerate}
\item
For any $t\in [0, T)$,  $(\mu^\varepsilon_t, \rho^\varepsilon_t)$ converges in probability to $(\mu_t, \rho_t)$. 
\item
The sequence $\{(\mu^\varepsilon_t, \rho^\varepsilon_t)_{t\in [0, T]}, \varepsilon>0\}$ of measure valued stochastic processes is weakly tight  with respect to  the cost functional $c$ (see Definition \ref{dfn: weakly tight}). 
\end{enumerate}
\end{lem}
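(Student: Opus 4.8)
The plan is to prove the two assertions separately; both rest on the same two ingredients, namely an It\^o/occupation-measure identity which forces every limit point of $(\mu^\varepsilon_t,\rho^\varepsilon_t)$ to solve the stationarity constraint \eqref{eqn: separability}, and the potential function $V$ from the admissibility definition, which controls the number of jumps of the rescaled process over the long horizon $[0,T^\varepsilon]$.

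For part~1, fix $t\in[0,T)$. Since $\varepsilon^{2\beta}T^\varepsilon\to0$, the horizon $[0,T^\varepsilon]$ of $\wt X^{\varepsilon,t}$ corresponds to the shrinking real-time window $[t,t+\varepsilon^{2\beta}T^\varepsilon]$; by continuity of $(a_s)$ at $t$ and continuity of the closed-set-valued data $(U^g_s,\partial G_s,\xi^g_s)$ at $t$, the frozen data $(a_t,U_t,G_t,\xi_t)$ approximates $(\wt a^{\varepsilon,t}_s,U_{t+\varepsilon^{2\beta}s},G_{t+\varepsilon^{2\beta}s},\xi_{t+\varepsilon^{2\beta}s})$ uniformly over $s\in[0,T^\varepsilon]$, almost surely as $\varepsilon\to0$. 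Combined with the localization already in force, this confines the supports of $\mu^\varepsilon_t$ and $\rho^\varepsilon_t$ to one fixed compact set and, for $\varepsilon$ small, to an arbitrarily small neighbourhood of $U^g_t$, resp.\ $\xi^g_t$. Next I would apply the jump It\^o formula to $f(\wt X^{\varepsilon,t}_\cdot)$ for $f\in C^2_0(\bbR^d)$, divide by $T^\varepsilon$, and use $b\equiv0$ together with the frozen-coefficient approximation to obtain
\begin{equation*}
\int A^{a_t}f\,d\mu^\varepsilon_t+\int Bf\,d\rho^\varepsilon_t=\tfrac1{T^\varepsilon}\bigl(f(\wt X^{\varepsilon,t}_{T^\varepsilon})-f(0)\bigr)+M^\varepsilon_t+o(1),
\end{equation*}
where the $o(1)$ (the frozen-coefficient error) tends to $0$ a.s.\ and $M^\varepsilon_t$ is a stochastic integral with $\bbE[(M^\varepsilon_t)^2]=O(1/T^\varepsilon)$. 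Applying It\^o to $V$ instead, and using $V(x+\xi_s(x))-V(x)<-\delta$ on $\partial G_s$ and $V\le M$ on $G_s$, gives $\delta N_{T^\varepsilon}\le C(1+T^\varepsilon)+|M'_{T^\varepsilon}|$ with $M'$ a martingale, hence $\bbE[(N_{T^\varepsilon}/T^\varepsilon)^2]\le C$ uniformly in $\varepsilon$; since $\rho^\varepsilon_t(\bbR^d\times\bbR^d)=N_{T^\varepsilon}/T^\varepsilon$, the family $\{(\mu^\varepsilon_t,\rho^\varepsilon_t)\}$ has uniformly compact support and uniformly bounded mass, hence is tight. I would then finish with a subsequence argument: from any sequence $\varepsilon_n\to0$ extract a subsequence along which $M^{\varepsilon_n}_t\to0$ and $M'_{T^{\varepsilon_n}}/T^{\varepsilon_n}\to0$ a.s.; for a.e.\ $\omega$, any weak limit $(\mu,\rho)$ of $(\mu^{\varepsilon_n}_t,\rho^{\varepsilon_n}_t)(\omega)$ satisfies $\int A^{a_t}f\,d\mu+\int Bf\,d\rho=0$ for all $f\in C^2_0(\bbR^d)$ (pass to the limit in the display above), while the support control forces $\mu=\pi\otimes\delta_{U_t}$ and $\rho=\nu\otimes\delta_{\xi_t}$ with $\pi\in\scrP(\overline G_t)$ and $\nu\in\scrM(\partial G_t)$ of finite mass; thus $(\pi,\nu)$ solves \eqref{eqn: separability}, and the uniqueness clause of admissibility gives $(\pi,\nu)=(\pi_t,\nu_t)$, i.e.\ $(\mu,\rho)=(\mu_t,\rho_t)$. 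Since every subsequential limit is the fixed (random) point $(\mu_t,\rho_t)$, convergence in probability follows.

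For part~2, the boundedness obtained above is essentially what weak tightness with respect to $c$ demands. By the localization $r_t,l_t,k_t,h_t\le M$, and on the uniformly compact supports of $\mu^\varepsilon_t$ and $\rho^\varepsilon_t$ the functions $D,Q,F,P$ are bounded, so
\begin{equation*}
0\le c_t(\mu^\varepsilon_t,\rho^\varepsilon_t)\le C\bigl(1+\rho^\varepsilon_t(\bbR^d\times\bbR^d)\bigr)=C\bigl(1+N_{T^\varepsilon}/T^\varepsilon\bigr),
\end{equation*}
and hence $\sup_{\varepsilon>0,\ t\in[0,T]}\bbE[c_t(\mu^\varepsilon_t,\rho^\varepsilon_t)^2]<\infty$ from the $L^2$-bound on $N_{T^\varepsilon}/T^\varepsilon$. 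This uniform $L^2$-bound yields the uniform integrability of $\{c_t(\mu^\varepsilon_t,\rho^\varepsilon_t)\}$ over $\varepsilon$ and $t\in[0,T]$, which, together with the measurability (and continuity away from jump times) of $t\mapsto(\mu^\varepsilon_t,\rho^\varepsilon_t)$ inherited from the construction of the feedback strategy, is precisely the weak tightness with respect to $c$ of Definition~\ref{dfn: weakly tight}.

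I expect the main obstacle to be part~1, and within it the coefficient-freezing step coupled with the passage to the limit in the occupation-measure identity: one must exploit $\varepsilon^{2\beta}T^\varepsilon\to0$ and the Hausdorff continuity of $(U_s,G_s,\xi_s)$ to guarantee that the supports of $\mu^\varepsilon_t$ and $\rho^\varepsilon_t$ concentrate on $U^g_t$ and $\xi^g_t$ and that the generators converge, and then lean on the uniqueness clause of admissibility to identify the limit. The only genuinely global-in-time input is the potential-function bound on the jump counts, and it is exactly this that prevents $\rho^\varepsilon_t$ from losing mass in the limit.
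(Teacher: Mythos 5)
Your proposal is correct and follows essentially the same route as the paper: the potential function $V$ plus It\^o's formula gives the uniform $L^1$ and $L^2$ bounds on $N^\varepsilon_{T^\varepsilon}/T^\varepsilon$, tightness and identification of the limit come from the constraint equation together with the support concentration on the graphs of $U_t$, $\xi_t$ and the uniqueness in the separability assumption, and weak tightness follows from the $k=2$ bound via uniform integrability and Lemma 2 of \cite{cremers1986weak}. The only difference is presentational: where the paper invokes Lemma 6.3 of \cite{cai2015asymptotic} and stable limits to show that limit points satisfy the stationarity constraint, you re-derive that step by hand with a coefficient-freezing It\^o identity and an almost-surely convergent subsequence argument, which is a legitimate substitute.
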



\begin{proof}
We claim first that
\begin{equation}\label{eqn: uniform integrable}
\sup_{\varepsilon >0}\bbE\Big[\Big(\frac{N^\varepsilon_{T^\varepsilon}}{T^\varepsilon}\Big)^k\Big] < \infty,\quad k=1, 2, 
\end{equation}
Indeed, by Ito formula, we have
\begin{align*}
\frac{{N^\varepsilon_{T^\varepsilon}}}{T^\varepsilon}
&\leq \frac{1}{T^\varepsilon}{\sum_{j=1}^{N^\varepsilon_{T^\varepsilon}}\delta^{-1} (V(\wt{X}^{\varepsilon, t}_{\wt{\tau}^\varepsilon_j-}) -V(\wt{X}^{\varepsilon, t}_{\wt{\tau}^\varepsilon_j}) )}\\
&= \delta^{-1}\frac{1}{T^\varepsilon}\Big[-
V(\wt{X}^{\varepsilon, t}_{T^\varepsilon} )+V(\wt{X}^{\varepsilon, t}_{0+}) - \int_0^{T^\varepsilon} \nabla V(\wt{X}^{\varepsilon, t}_s)^T  \sqrt{\wt{a}^{\varepsilon, t}_s}d\wt{W}^{\varepsilon, t}_s \\
&\qquad    + \int_0^{T^\varepsilon} \sum_{i}\wt{u}^{\varepsilon, t}_{i, s}\partial_{i}V(\wt{X}^{\varepsilon, t}_s)ds + \int_0^{T^\varepsilon} \frac{1}{2}\sum_{ij}\wt{a}^{\varepsilon, t}_{ij, s}\partial^2_{ij}V(\wt{X}^{\varepsilon, t}_s)ds 
\Big] \\
&\leq \frac{M}{\delta}\Big(const. -  \int_0^{T^\varepsilon} \nabla V(\wt{X}^{\varepsilon, t}_s)^T  \sqrt{\wt{a}^{\varepsilon, t}_s}d\wt{W}^{\varepsilon, t}_s \Big)
\end{align*}
the last term being obviously square-integrable after the localization procedure.  Now  we are ready to prove the two claims. \\

\emph{Convergence in probability. } By \eqref{eqn: uniform integrable} and localization, for any $t\in [0, T)$ fixed,  the family $\{(\mu^\varepsilon_t, \rho^\varepsilon_t), \varepsilon >0\}$ is tight. 
Let $\bbQ_t$ be any stable limit of $(\mu^\varepsilon_t, \rho_t^\varepsilon)$. Since
\begin{equation*}
c_t(\mu^\varepsilon_t, \rho^\varepsilon_t) \leq M(1 + \frac{N^\varepsilon_{T^\varepsilon}}{T^\varepsilon}),
\end{equation*} 
we have $\sup_\varepsilon \bbE[c_t(\mu^\varepsilon_t,
\rho^\varepsilon_t)]< \infty$ in view of \eqref{eqn: uniform
  integrable} with $k=1$. Then by Lemma 6.3 in \cite{cai2015asymptotic}, we have $\bbQ_t^\omega$-a.e., $$(\mu, \rho) \in S(a_t(\omega)),$$
where we recall that $S(a)$ is defined by
\begin{align*}
S(a)= \Big\{ (\mu, \bar{\rho})&\in \scrP({\bbR^d\times \bbR^d}) \times \scrM(\overline{\bbR^d\times \bbR^d\setminus\{0_\xi\}}), \\
&\bar{\rho}  = \rho + \theta_{\bar{\rho}} \delta_{\infty} \text{ with } \rho\in \scrM(\bbR^d\times \bbR^d\setminus\{0_\xi\}),   \\
& \int_{\bbR^d\times \bbR^d} A^af (x, u)\mu(dx, du)  + \int_{\bbR^d\times \bbR^d\setminus\{0_\xi\}} Bf(x, \xi)\rho(dx, d\xi) = 0, \forall f\in C^2_0(\bbR^d)\Big.\Big\}.
\end{align*} 
On the other hand, let $F^\omega_\mu$ and $F^\omega_\rho$ be given by
\begin{equation*}
F^\omega_\mu =\{(x, U_t^\omega(x))|x\in G^\omega_t \}, \quad F^\omega_\rho=\{(x, \xi_t^\omega(x))|x\in \partial G^\omega_t \}.
\end{equation*}
By the continuity of $(G_t, \xi_t)$, $\bbQ^\omega_t$-a.e., $(\mu, \rho)$ is supported on $F^\omega_\mu$ and $F^\omega_\rho$ respectively. By the separability \eqref{eqn: separability} of $(G_t, \xi_t)$, such couple of $(\mu, \rho)$ is unique, so we have
\begin{equation*}
\bbQ_t = \bbP(d\omega)\otimes\delta_{\{(\mu^{(a_t,U_t, G_t, \xi_t)(\omega)}, \rho^{(a_t(, U_t, G_t, \xi_t)(\omega)})\}}.
\end{equation*}
which is the unique possible limit point and we deduce that convergence in probability holds. \\

\emph{Weak tightness of $(\mu^\varepsilon, \rho^\varepsilon)$ with respect to $c$. } 
In view of \eqref{eqn: uniform integrable} with $k=2$, the application $(t, \omega)\mapsto c_t(\omega, \mu^\varepsilon_t(\omega), \rho^\varepsilon_t(\omega))$ is uniformly square-integrable w.r.t. $\bbP\otimes dt$, hence uniformly integrable. By Lemma 2 of \cite{cremers1986weak} and the remark after it, $(\mu^\varepsilon, \rho^\varepsilon)$ is weakly tight w.r.t. $c$. 
\end{proof}

\paragraph{Proof of  Theorem \ref{theo: limit in proba singular}}
The proof is the same as Theorem \ref{theo: limit in proba} with \eqref{eqn: uniform integrable} replaced by
\begin{equation*}
\sup_{\varepsilon >0} \bbE\Big[\Big(\frac{\wt{\varphi}^\varepsilon_{T^\varepsilon}}{T^\varepsilon}\Big)^k\Big] < \infty, \quad k=1, 2,
\end{equation*}
which can be obtained by applying It\=o formula to $V(\wt{X}^\varepsilon_{T^\varepsilon})$ : 
\begin{align*}
\frac{{\wt{\varphi}^\varepsilon_{T^\varepsilon}}}{T^\varepsilon}
&\leq \frac{1}{T^\varepsilon}\int_0^{T^\varepsilon}\delta^{-1}[-\wt{\gamma}^\varepsilon_t \cdot \nabla V(\wt{X}^{\varepsilon, t}_s)]d\wt{\varphi}^\varepsilon_t\\
&= \delta^{-1}\frac{1}{T^\varepsilon}\Big[-
V(\wt{X}^{\varepsilon, t}_{T^\varepsilon} )+V(\wt{X}^{\varepsilon, t}_{0+}) + \int_0^{T^\varepsilon} \nabla V(\wt{X}^{\varepsilon, t}_s) \cdot  \sqrt{\wt{a}^{\varepsilon, t}_s}d\wt{W}^{\varepsilon, t}_s \\
&\qquad    + \int_0^{T^\varepsilon} \sum_{i}\wt{u}^{\varepsilon, t}_{i, s}\partial_{i}V(\wt{X}^{\varepsilon, t}_s)ds + \int_0^{T^\varepsilon} \frac{1}{2}\sum_{ij}\wt{a}^{\varepsilon, t}_{ij, s}\partial^2_{ij}V(\wt{X}^{\varepsilon, t}_s)ds 
\Big] \\
&\leq \frac{M}{\delta}\Big(const. +  \int_0^{T^\varepsilon} \nabla V(\wt{X}^{\varepsilon, t}_s)^T  \sqrt{\wt{a}^{\varepsilon, t}_s}d\wt{W}^{\varepsilon, t}_s \Big).
\end{align*}


\paragraph{Proof of Theorem \ref{theo: limit in proba stoch}}

The rescaled process $(\wt{X}^{\varepsilon, t}_s)$ is given by 
\begin{equation*}
d\wt{X}^{\varepsilon, t}_s = \wt{b}^{\varepsilon, t}_s ds  + \sqrt{\wt{a}^{\varepsilon, t}_s} d\wt{W}^{\varepsilon, t}_s + \wt{u}^{\varepsilon, t}_sds, 
\end{equation*}
with 
\begin{equation*}
\wt{b}^{\varepsilon, t}_s = \varepsilon^\beta_{t+ \varepsilon^{2\beta} s} , \quad  \wt{a}^{\varepsilon, t}_s = a_{t + \varepsilon^{2\beta s}}, \quad \wt{u}^{\varepsilon, t}_s =\varepsilon^{(2-1)\beta}u^\varepsilon_{t+\varepsilon^{2\beta} s}. 
\end{equation*}
The empirical occupation measure $\mu^\varepsilon_t$ is defined by
\begin{equation*}
\mu^\varepsilon_t = \frac{1}{T^\varepsilon}\int_0^{T^\varepsilon} \delta_{\{(\wt{X}^{\varepsilon, t}_s, \wt{u}^{\varepsilon, t}_s)\}}ds.
\end{equation*}
The proof is slightly different since $X^\varepsilon$ is not constrained inside a uniformly bounded domain $G_t$ as before. \\


\noindent Up to a localization procedure, we can assume that there exist $\theta, \Theta >0$ such that
\begin{equation*}
(A^{a_t}+ U_t(x)\cdot\nabla)V(x) \leq \theta - 2\Theta V(x), \quad x\in \bbR^d.
\end{equation*}
We follow \citep[Lemma 2.5.5]{arapostathis2011ergodic} and obtain that
\begin{equation}\label{eqn: bound V}
\E{V(\wt{X}^{\varepsilon,t}_s)} \leq \frac{\theta}{2\Theta} + V(0), \quad \forall t\in [0, T), s\in(0, T^\varepsilon].
\end{equation}
Let $\pi^\varepsilon_t$ be the marginal of $\mu^\varepsilon_t$ on $\bbR^d$. Since $ rD + l Q\circ U_t \leq V$ near infinity,  the empirical costs $\{c_t(\mu_t^\varepsilon)\}$ are bounded by $\int (b_0 + b_1 V(x)) d\pi^\varepsilon_t.$ By \eqref{eqn: bound V}, we have
\begin{equation}\label{eqn: bound c_t}
\sup_{\varepsilon}\bbE[c_t(\mu^\varepsilon_t)] \leq \sup_{\varepsilon} \bbE[\int (b_0 + b_1 V(x)) d\pi^\varepsilon_t] = b_0 + b_1\frac{1}{T^\varepsilon}\int_0^{T^\varepsilon}\bbE[V(\wt{X}^{\varepsilon, t}_s)]ds <\infty. 
\end{equation}

\noindent \emph{Convergence in probability}.  Since $U_t$ is admissible,  $U_t$ sends compact sets into compact sets, the tightness of $\mu^\varepsilon_t$ follows directly from the tightness of $\pi^\varepsilon_t$. 
By \eqref{eqn: bound c_t} we can apply Lemma 6.3 in \cite{cai2015asymptotic} and the convergence in probability follows from the separability of $\pi$ w.r.t. $A^a_u$. \\

\noindent \emph{Weak tightness.} For any $t\in [0, T)$ and $\varepsilon>0$, we have that \eqref{eqn: bound c_t} holds for a sufficiently large constant.  Since $c_t(\mu_t^\varepsilon)\to_p c_t(\mu_t)$, we have by Fubini theorem and dominated convergence theorem
\begin{equation*}
\limsup_{\varepsilon\to 0} \int c_t(\mu_t^\varepsilon) dt\otimes d\bbP = \int c_t(\mu_t)dt\otimes d\bbP < \infty.
\end{equation*}
By \cite[Lemma 3]{cremers1986weak}, we obtain the weak tightness.




\begin{subappendices}
\section{Convergence of integral functionals} \label{sec: conv of integrals}
In this section, we provide a direct generalization of the result in \cite{cremers1986weak}, which allows us to pass from the convergence of local systems (Lemma \ref{lem: local to global})  to the convergence of cost integrals \eqref{eqn: global convergence}. \\

\noindent Let $(\Omega, \calF, \bbP)$ be a probability space, $(T, \calB, \mu)$ a $\sigma$-finite measure space and $S$ a Polish space with Borel $\sigma$-field $\calB_S=\calB(S)$, $C(S) (C_b(S))$ the space of continuous (bounded continuous) real-valued functions on $S$, $\calL_1(\mu):=\calL_1(T, \calB, \mu)$ the space of integrable real-valued functions with seminorm $\|x\|_1:=\int_T |x_t|\mu(dt)$ and $L_1(\mu):=L_1(T, \calB, \mu)$ the corresponding Banach space. \\

\noindent Now let $c: \Omega\times T\times S\to \bbR$ be a $\calF\otimes\calB\otimes\calB_S$-measurable function with $c_t(\omega, \cdot)\in C(S)$ for all $t\in T$.  Let $\{X^n, n \in \bbN\}$ be a sequence of $\calF\otimes \calB/\calB_S$-measurable functions $X^n: \Omega\times T\to S$ with
\begin{equation*}
\int_T |c_t(\omega, X^n_t(\omega))|\mu(dt) < +\infty, \quad \omega\in \Omega, n\in \bbN.
\end{equation*}
Then the random integral $I(X^n, c)$ is defined by 
\begin{equation*}
I(X^n, c):= \int_T c_t(X^n_t)\mu(dt). 
\end{equation*}
Note that the map $(\omega, t)\mapsto X^{n, c}_t(\omega):= c_t(\omega, X^n_t(\omega))$ is $\calF\otimes \calB$-measurable. For all $\omega\in \Omega$, $\hat{X}^{n, c}:=X^{n, c}_\cdot(\omega)$ is an element of $L_1(\mu)$  and $\omega\mapsto \hat{X}^{n, c}(\omega)$ is $\calF/\calB(L_1(\mu))$-measurable (see \cite{cremers1986weak}).

\begin{dfn}
The processes $X^n$ converges to $X^0$ in probability in finite dimension if there is $T_0\in \calB$ with $\mu(T\setminus T_0)=0$ such that for any $t_1, \cdots, t_k\in T_0$, 
$(X^n_{t_1}, \cdots, X^n_{t_k})$ converges to  $(X^0_{t_1}, \cdots, X^0_{t_k})$ in probability. 
\end{dfn}

\begin{dfn}\label{dfn: weakly tight}
A sequence $\{X^n, n\in\bbN\}$ of measurable processes is called weakly tight with respect  to $c$ if for each $\delta >0$, there is $K\subset L_1(\mu)$ weakly compact, that is compact in the $\sigma(L_1(\mu), L_\infty(\mu))$-topology, such that
\begin{equation*}
\inf_n \bbP[\hat{X}^{n, c} \in K] > 1-\delta. 
\end{equation*}
\end{dfn}

\noindent  In particular, the family $X^n$ is weakly tight with respect to  $c$ if one of the following condition holds:
\begin{enumerate}
\item
The family of r.v. $\{(\omega, t)\mapsto c_t(\omega, X^n_t(\omega)),  n  \in \bbN^*\}$ is $\bbP\otimes \mu$-uniformly integrable, see \cite[Lemma 2]{cremers1986weak}.
\item
 $X^n$  converges weakly to $X^0$ in finite dimension and
\begin{equation*}
\limsup_n \int c_t(X^n_t) dt\otimes d\bbP \leq \int c_t(X^0_t) dt\otimes d\bbP < \infty. 
\end{equation*}
See \cite[Lemma 3]{cremers1986weak}.
\end{enumerate}
Now we can state our result concerning the convergence in probability of the random variables $I(X^n, c)$. 

\begin{theo}
Let $\{X^n, n\in \bbN\}$ be a sequence of stochastic process. If $X^n$ converges to $X^0$ in probability in finite dimension and if $\{X^n, n\in \bbN\}$ is weakly tight w.r.t. $c$, then $I(X^n, c)$ converges to $I(X^0, c)$ in  probability. 
\end{theo}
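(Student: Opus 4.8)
The plan is to avoid trying to extract an almost‑sure limit of the $L_1(\mu)$‑valued random variables $\hat X^{n,c}_\cdot(\omega):=c_\cdot(\omega,X^n_\cdot(\omega))$ — weak tightness is not strong enough for that — and instead to bound $\bbP[\,|I(X^n,c)-I(X^0,c)|>\varepsilon\,]$ directly, invoking weak tightness only through the Dunford--Pettis characterisation of relatively weakly compact subsets of $L_1(\mu)$, and invoking finite‑dimensional convergence in probability only through a bounded‑convergence argument on the product space $\Omega\times T$. Throughout I write $I(X^n,c)=\int_T\hat X^{n,c}_t\,\mu(dt)$ and recall that each $\hat X^{n,c}$ is an $\calF/\calB(L_1(\mu))$‑measurable map into $L_1(\mu)$; I will also use that $\hat X^{0,c}\in L_1(\mu)$, which is implicit in the statement. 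I would treat the case $\mu(T)<\infty$ first and deal with $\sigma$‑finiteness at the end.

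First I would fix $\varepsilon>0$ and $\delta>0$ and, by weak tightness, choose a weakly compact set $K=K_\delta\subset L_1(\mu)$ with $\inf_n\bbP[\hat X^{n,c}\in K]>1-\delta$. By Dunford--Pettis, $K$ is norm‑bounded and uniformly integrable, so I can pick $R=R_\delta<\infty$ with $\sup_{g\in K}\int_{\{|g|>R\}}|g|\,d\mu<\varepsilon/4$ and, enlarging $R$ and using $\hat X^{0,c}\in L_1(\mu)$, also ensure that the event $E:=\{\int_{\{|\hat X^{0,c}_t|>R\}}|\hat X^{0,c}_t|\,\mu(dt)<\varepsilon/4\}$ has $\bbP(E)>1-\delta$. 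Setting $G_n:=\int_T(|\hat X^{n,c}_t-\hat X^{0,c}_t|\wedge2R)\,\mu(dt)$ and integrating the elementary inequality $|a-b|\le(|a-b|\wedge2R)+|a|\mathbf{1}_{\{|a|>R\}}+|b|\mathbf{1}_{\{|b|>R\}}$ over $T$ would give
\[
|I(X^n,c)-I(X^0,c)|\le G_n+\int_{\{|\hat X^{n,c}_t|>R\}}|\hat X^{n,c}_t|\,d\mu+\int_{\{|\hat X^{0,c}_t|>R\}}|\hat X^{0,c}_t|\,d\mu .
\]
On $\{\hat X^{n,c}\in K\}\cap E$ the last two terms sum to less than $\varepsilon/2$, so $\{|I(X^n,c)-I(X^0,c)|>\varepsilon\}\subseteq\{\hat X^{n,c}\notin K\}\cup E^{c}\cup\{G_n>\varepsilon/2\}$, whence $\bbP[\,|I(X^n,c)-I(X^0,c)|>\varepsilon\,]<2\delta+\bbP[G_n>\varepsilon/2]$.

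It then remains to show $G_n\to0$ in probability, and here finite‑dimensional convergence enters. Letting $T_0$ be the full‑$\mu$‑measure set from the definition, for each fixed $t\in T_0$ I have $X^n_t\to X^0_t$ in probability in $S$; since $x\mapsto c_t(\omega,x)$ is continuous, the continuous mapping theorem gives $\hat X^{n,c}_t\to\hat X^{0,c}_t$ in probability, hence $\bbE[\,|\hat X^{n,c}_t-\hat X^{0,c}_t|\wedge2R\,]\to0$. Since $\mu(T_0)\le\mu(T)<\infty$ and the integrand is bounded by $2R$, a second bounded‑convergence argument together with Tonelli's theorem yields $\bbE[G_n]=\int_{T_0}\bbE[\,|\hat X^{n,c}_t-\hat X^{0,c}_t|\wedge2R\,]\,\mu(dt)\to0$, so $G_n\to0$ in $L^1(\bbP)$ and a fortiori in probability. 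Combined with the previous display, $\limsup_n\bbP[\,|I(X^n,c)-I(X^0,c)|>\varepsilon\,]\le2\delta$, and letting $\delta\downarrow0$ finishes the finite‑measure case. For $\sigma$‑finite $\mu$ I would fix an exhaustion $T_M\uparrow T$ by finite‑measure sets and split $G_n=\int_{T_M}(\cdots)\,d\mu+\int_{T\setminus T_M}(\cdots)\,d\mu$: the first piece tends to $0$ in probability as before for each fixed $M$, and for the second I would use $|\hat X^{n,c}_t-\hat X^{0,c}_t|\wedge2R\le|\hat X^{n,c}_t|+|\hat X^{0,c}_t|$ together with the ``uniform tightness in $T$'' half of Dunford--Pettis, $\sup_{g\in K}\int_{T\setminus T_M}|g|\,d\mu\to0$ as $M\to\infty$ (bounding $\hat X^{n,c}$ on $\{\hat X^{n,c}\in K\}$), and $\int_{T\setminus T_M}|\hat X^{0,c}_t|\,d\mu\to0$ a.s.\ (from $\hat X^{0,c}\in L_1(\mu)$), folding a large enough $M$ into the bookkeeping exactly as $R$ was folded in.

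The step I expect to be the main obstacle is reconciling weak tightness with the rest: it only asserts $\bbP[\hat X^{n,c}\in K_\delta]>1-\delta$ for each $n$ \emph{separately}, not that all, or even all sufficiently large, $\hat X^{n,c}$ lie in one weakly compact set on a fixed large $\omega$‑event — so a pathwise Vitali (or Dunford--Pettis) argument giving almost‑sure $L_1(\mu)$‑convergence of $\hat X^{n,c}$ along subsequences is unavailable. The direct estimate above is designed to sidestep this: Dunford--Pettis is used \emph{uniformly over $K_\delta$}, so the index $n$ realising $\hat X^{n,c}(\omega)\in K_\delta$ is irrelevant, while finite‑dimensional convergence is used only on the truncated bounded quantity $G_n$, where a soft argument suffices.
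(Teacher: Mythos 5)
Your proof is correct, and it takes a genuinely different route from the paper's. The paper does not estimate $\bbP[\,|I(X^n,c)-I(X^0,c)|>\varepsilon\,]$ directly: it proves \emph{stable} convergence of $I(X^n,c)$ by testing against bounded random variables $Y$ and bounded Lipschitz $f$, truncating the cost at a dominating function $h_N\in\calL_1^+(\mu)$ supplied by Remark 1 of \cite{cremers1986weak} (which converts weak compactness of $K_N$ into the bound $\sup_{x\in K_N}\int(|x|-h_N)^+d\mu\le 1/N$), handling the truncated functionals by a polynomial/moment argument combined with Fubini and dominated convergence, and then splitting the error into three terms $e_1,e_2,e_3$; convergence in probability is finally deduced from stable convergence via Lemma D.1 of \cite{cai2015asymptotic}. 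You bypass the stable-convergence machinery entirely and give a Vitali-type argument: truncation at level $R$, Dunford--Pettis uniform integrability applied \emph{uniformly over} the weakly compact set $K_\delta$ (so that only membership $\hat X^{n,c}\in K_\delta$, not the index $n$, matters), and finite-dimensional convergence used only on the bounded quantity $G_n$ through Tonelli plus bounded convergence. This is more elementary and self-contained (no appeal to stable convergence, to Lemma D.1, or to Remark 1 of \cite{cremers1986weak}), and it in fact yields the stronger conclusion $\int_T|c_t(X^n_t)-c_t(X^0_t)|\,\mu(dt)\to_p 0$, of which the theorem is an immediate consequence; what the paper's route buys is that it parallels the original weak-convergence proof of Cremers--Kadelka and reuses machinery already set up in the companion paper. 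Two small points to make explicit if you write this up: the ``continuous mapping'' step involves the random function $c_t(\omega,\cdot)$, so it should be justified by the subsequence/a.s.-convergence argument (continuity of $c_t(\omega,\cdot)$ for each fixed $\omega$ is exactly what is needed); and in the $\sigma$-finite case the bound $\sup_{g\in K}\int_{T\setminus T_M}|g|\,d\mu\to0$ uses the ``no escape of mass'' half of Dunford--Pettis together with the $\varepsilon$--$\delta$ half (since your exhaustion $T_M$ need not coincide with the finite-measure set furnished by the theorem), while $\sup_{g\in K}\int_{\{|g|>R\}}|g|\,d\mu\to0$ still follows because $\mu(\{|g|>R\})\le \sup_{g\in K}\|g\|_1/R$ uniformly on $K$. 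Both gaps are routine and your outline already acknowledges where they sit.
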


\begin{proof}
After  Lemma D.1 in \cite{cai2015asymptotic}, it suffices to show that $I(X^n, c)$ converges stably to $I(X^0, c)$. Let $Y$ be any bounded r.v. and $f$ bounded continuous and Lipschitz. We will show that
\begin{equation}\label{eqn: conv temp}
\bbE[Yf(I(X^n, c))] \to \bbE[Y f(I(X^0, c))], \quad n\to \infty. 
\end{equation}

\noindent Step i.) Let $h\in \mathcal{L}_1^+(\mu)$. Define
\begin{equation*}
c^h_t(\omega, x) := \max\{ -h_t, \min  \{h_t, c_t(\omega, x)\}\},
\end{equation*}
and 
\begin{equation*}
I(X^n, c^h) := \int_T c^h_t(X^n_t)\mu(dt), \quad  n\in \bbN.
\end{equation*}
 We show that $I(X^n, c^h)$ converges stably to $I(X^0, c^h)$, i.e. 
\begin{equation}\label{eqn: conv h}
\bbE[Y f(I(X^n, c^h)] \to \bbE[Y f(I(X^0, c^h)],\quad n \to \infty.
\end{equation}
 Since $|I(X^n, c^h)|\leq \|h\|_1$ and polynomials are dense in $C([-\|h\|_1, \|h\|_1])$, we only need to consider $f(x) = x^l$ for some $l\in \bbN$. By Fubini's theorem, we obtain
\begin{align*}
\bbE[Y f(I(X^n, c^h))]
&= \int_\Omega Y(\omega) \Big(\int_Tc^h_{t_1}(\omega, X^n_{t_1}) \mu(d{t_1})\cdots \int_T Y(\omega) c^h_{t_l}(\omega, X^n_{t_l}) \mu(d{t_l})\Big) \bbP(d\omega)\\
&=\int\cdots\int F_n(t_1, \cdots, t_l) \mu(dt_1)\cdots \mu(dt_l)
\end{align*}
where
\begin{equation*}
F_n(t_1, \cdots, t_l) := \int_\Omega Y(\omega) c^h_{t_1}(\omega, X^n_{t_1})\cdots c^h_{t_l}(\omega, X^n_{t_l})\bbP(d\omega).
\end{equation*}
By weak convergence and the finite dimensional convergence in probability of $X^n$, we have $F_n(t_1, \cdots, t_l)\to F_0(t_1, \cdots, t_l)$ for any $t_1, \cdots, t_l \in T_0$. Since $$F_n(t_1, \cdots, t_l) \leq (\sup_\omega |Y|)h(t_1)\cdots h(t_l),$$ we have
\begin{equation*}
\int\cdots\int F_n(t_1, \cdots, t_l) \mu(dt_1)\cdots \mu(dt_l)\to \int\cdots\int F_0(t_1, \cdots, t_l) \mu(dt_1)\cdots \mu(dt_l),
\end{equation*}
by dominated convergence, whence \eqref{eqn: conv temp}.\\

\noindent Step ii.) By \cite[Remark 1]{cremers1986weak} and the weak tightness of
$X^n$ w.r.t. $c$, for all $N\in \bbN^*$ there is a weakly compact set $K_N$ of  $L_1(\mu)$ and $h_N\in \calL^+_1(\mu)$  such that
\begin{equation}\label{eqn: temp 1}
\inf_{n} \bbP[\hat{X}^{n, c} \in K_N] \geq 1- \frac{1}{N},
\end{equation}
and
\begin{equation}\label{eqn: temp 2}
\sup_{x\in K_N} \int_T (\abs{x} - h_N)^+ d\mu \leq \frac{1}{N}.
\end{equation}
We can assume w.l.o.g that $h_N\uparrow \infty$ for $N\uparrow \infty$. 
We have
\begin{align*}
 {|\bbE[Yf(I(X^n, c))] - \bbE[Yf(I(X^0, c))]|}
&\leq  {|\bbE[Yf(I(X^n, c))] - \bbE[Yf(I(X^n, c^{h_N}))]|} \\
&\quad+ {|\bbE[Yf(I(X^n, c^{h_N}))] - \bbE[Yf(I(X^0, c^{h_N}))]|} \\
&\quad + {|\bbE[Yf(I(X^0, c^{h_N}))] - \bbE[Yf(I(X^0, c))]|}\\
&=: e_1 +e_2 + e_3.
\end{align*}
Since $Y$ is bounded and $f$ is bounded Lipschitz, we have
\begin{align*}
 e_1 &\leq (\sup_\omega |Y|)  (2(\sup_x |f|) \bbP[|I(X^n, c) - I(X^n, c^{h_N})| \geq \frac{1}{N}] + \text{Lip}(f) \frac{1}{N} )\\
& \leq  const. \frac{1}{N},
\end{align*}
where the constant is independent of $n$. Here we use \eqref{eqn: temp 1}, \eqref{eqn: temp 2} and 
\begin{align*}
|I(X^n, c^{h_N}) - I(X^n, c)|
& \leq \int_T |c^{h_N}_t(X^n_t) - c_t(X^n_t)| \mu(dt) \\
&= \int_T(|c_t(X^n_t)| - h_{N}(t))^+ \mu(dt).
\end{align*}
Hence $e_1<\varepsilon$ for $N$ large enough. By dominated convergence, $I(X^0, c^{h_N})\to I(X^0, c)$ pointwise, hence $e_3< \varepsilon$ for $N$ large enough. Now fix $N$, by \eqref{eqn: conv h}, we have $e_2 < \varepsilon$ for $n $ large enough.
In sum, we have $|\bbE[Yf(I(X^n, c))] - \bbE[Yf(I(X^0, c))]| \leq 3 \varepsilon$ for $n$ large enough. Since $\varepsilon$ is arbitrary, \eqref{eqn: conv temp} follows and we can conclude.
\end{proof}

\end{subappendices}



\end{document}